%% file: main.tex
\documentclass{article}

\newif\ifarxiv
\arxivtrue

\usepackage{graphicx}
\usepackage{amsmath,amssymb,amsthm,bm}
\usepackage[a4paper]{geometry}
\usepackage{algorithm,algorithmic}
\usepackage[colorlinks=false,pdfborder={0 0 0}]{hyperref}
\usepackage{authblk}
\usepackage{color}
\usepackage{tikz}

\graphicspath{{fig/}}
\DeclareGraphicsExtensions{.mps,.pdf,.eps,.png}
\usetikzlibrary{positioning,calc}

\newenvironment{keywords}{\medskip\textbf{Keywords:}}{}
\newenvironment{AMS}{\medskip\textbf{AMS subject classifications (2020).}}{}

\newtheorem{assumption}{Assumption}
\newtheorem{theorem}{Theorem}

\newtheorem{remark}{Remark}
\newtheorem{lemma}{Lemma}

\newtheorem{property}{Property}
\newtheorem{corollary}{Corollary}
\theoremstyle{plain}

\input macros.tex

\title{Asynchronous Jacobi and randomized Gauss--Seidel methods in shared and distributed memory: A unified convergence-rate analysis}
\author[1]{Erin Carson}
\author[1]{Yuxin Ma}
\affil[1]{Department of Numerical Mathematics, Faculty of Mathematics and Physics, Charles University, Sokolovsk\'{a} 49/83, 186 75 Praha 8, Czechia}

\begin{document}
\maketitle

\input abstract.tex

\input introduction.tex
\input Jacobi_and_GS.tex
\input algorithm.tex

\input convergence.tex

\input discussion.tex
\input conclusion.tex
\input acknowledgments.tex

\bibliographystyle{abbrvurl}
\bibliography{mybib}


\end{document}

%% file: macros.tex
\input macros_latex.tex

\newcommand*{\Ej}[1]{{\mathbb{E}_{j}}}
\newcommand*{\Ejone}[1]{\mathbb{E}_{j+1}}

\newcommand*{\bigO}{O}

\newcommand*{\setphij}[1]{\Phi(#1, d^{(#1)})}
\newcommand*{\xread}[1]{x^{(\setphij{#1})}}

%% file: macros_latex.tex
\DeclareMathOperator{\diag}{diag}

\newcommand{\fro}{\mathsf F}

\newcommand*{\trans}{^{\top}}

\newcommand*{\Etemp}[1]{\widetilde{\mathbb{E}}_{#1}}
\newcommand*{\expectabbr}[1]{\mathbb{E}_{#1}}

\newcommand*{\expect}[1]{\mathbb{E}\bigl[#1\bigr]}

\newcommand*{\abs}[1]{\bigl\lvert#1\bigr\rvert}
\newcommand*{\norm}[1]{\bigl\Vert#1\bigr\rVert}
\newcommand*{\normA}[1]{\Vert#1\rVert_A}
\newcommand*{\normfro}[1]{\bigl\Vert#1\bigr\rVert_{\fro}}

\def\adots{\mathinner{\mkern2mu\raise1pt\hbox{.}\mkern2mu
    \raise4pt\hbox{.}\mkern2mu\raise7pt\hbox{.}\mkern1mu}}

%% file: abstract.tex
\begin{abstract}
Asynchronous iterative methods are attractive for large-scale parallel computing because they reduce synchronization and communication overhead.
Existing convergence-rate analyses, however, have primarily focused on shared memory implementations, whereas distributed memory systems introduce more general and potentially inconsistent communication delays.
In this work, we revisit asynchronous Jacobi and randomized Gauss--Seidel (RGS) methods for symmetric positive definite linear systems from a unified perspective.
We first introduce a general asynchronous model that encompasses both shared memory and distributed memory settings and expresses the two methods through a common coordinate-update framework.
We then establish linear convergence in expectation under an explicit stability condition.
The resulting convergence bound depends algebraically on the delay through the quantity \(\sqrt{\rho\tau}+\rho\tau\), where \(\tau\) is the maximum communication delay and \(\rho\) reflects the communication pattern of the underlying parallel implementation.
In particular, under an appropriate scaling regime with \(\rho\tau=O(1)\), the guaranteed per-iteration convergence rate has the same asymptotic order as that of synchronous RGS.
These results provide a unified framework for quantifying the effect of asynchronicity on asynchronous Jacobi/RGS methods in both shared and distributed memory environments.

\begin{keywords}
Asynchronous, distributed memory parallel, randomized, Jacobi, Gauss--Seidel
\end{keywords}

\begin{AMS}
65F10, 65C20, 65Y05
\end{AMS}
\end{abstract}

%% file: introduction.tex
\section{Introduction}
\label{sec:introduction}
We consider the problem of solving large scale linear systems of the form \(Ax = b\), which arise in a wide range of scientific and engineering applications, including numerical solutions of partial differential equations, optimization, and data analysis.
A classical approach to solving such systems is based on stationary iterative methods, among which the Jacobi and Gauss--Seidel (GS) algorithms are two of the most fundamental and widely used schemes.
These methods are particularly attractive due to their simplicity, low per-iteration computational cost, and suitability for large sparse problems, making them essential building blocks in modern scientific computing.

However, in parallel systems, the performance and scalability of these iterative methods are often limited by the high cost of communication.
In particular, traditional implementations rely on synchronous execution, where global synchronization points are required to ensure that all processing units have completed their updates before proceeding to the next iteration.
Such synchronization points can lead to significant idle time and communication overhead, especially in heterogeneous or high-latency systems.
To address this issue, asynchronous iterative methods have been proposed, in which processors perform updates using the most recently available information without waiting for global synchronization.

In recent decades, asynchronous iterative methods have attracted attention due to their ability to exploit parallelism without the need for strict synchronization.
Early work in this area can be traced back to chaotic relaxation~\cite{CM1969}, where updates are performed using possibly outdated information.
Building on this idea, a variety of asynchronous algorithms have been developed for solving linear systems, including asynchronous Jacobi~\cite{BBDH2014,TNCA2022,WC2018,WC2019}, asynchronous Gauss--Seidel~\cite{ADG2015}, asynchronous Richardson~\cite{CFS2021}, and asynchronous Schwarz~\cite{FSS1997,MSV2017,NCA2021,YCBD2019} methods. 
These approaches are particularly well suited for modern large scale computing environments, such as shared memory and distributed memory systems, where communication delays and lack of synchronization are inherent.

Despite the broad development of asynchronous methods, most existing theoretical studies have primarily focused on establishing convergence guarantees, i.e., whether an asynchronous algorithm converges under certain assumptions~\cite{CFS2021,FSS1997,MSV2017,WC2018,WC2019}.
Significantly fewer works provide explicit descriptions of convergence rates.
In particular, \cite{ADG2015} establishes convergence rates for the asynchronous randomized Gauss--Seidel (RGS) method in shared memory systems, based on the convergence analysis of RGS from~\cite{LL2010}.
However, distributed memory implementations involve different communication patterns, where locally available components can be current while remotely communicated components may be delayed.
Existing shared memory models and their convergence analyses do not directly capture this setting.
This motivates the development of a convergence-rate theory that can account for both shared memory and distributed memory implementations within a unified framework.

In this paper, we address this gap by studying asynchronous Jacobi and RGS methods in shared memory and distributed memory environments within a unified framework.
We first introduce a unified asynchronous model that captures the different communication patterns arising in these two parallel architectures.
We then derive a convergence-rate theory for this unified model that establishes linear convergence in expectation for symmetric positive definite linear systems. 
The analysis quantifies how the convergence rate is affected jointly by the communication delay and the underlying communication pattern, and shows that the resulting delay dependence is algebraic rather than exponential.
Finally, our results identify a regime in which asynchronicity does not change the asymptotic order of the convergence rate relative to synchronous RGS.

The remainder of this paper is organized as follows.
In Section~\ref{sec:preliminary}, we review the classical synchronous Jacobi and Gauss--Seidel methods.
Section~\ref{sec:algorithm} introduces asynchronous Jacobi algorithms in both shared memory and distributed memory settings, and discusses their equivalence to asynchronous RGS.
Furthermore, we propose a unified modeling framework.
Under this framework, Section~\ref{sec:conv} presents our convergence rate analysis for asynchronous algorithms.
Finally, in Section~\ref{sec:discussion}, we provide numerical experiments that simulate asynchronous algorithms to validate and illustrate the theoretical results.

Before proceeding, we summarize the notation and some basic definitions used throughout the paper.
We adopt MATLAB-style notation for submatrices.
For example, \(x_k\) denotes the \(k\)-th component of the vector \(x\), and \(A_{\Omega, :}\) represents the collection of all rows of the matrix \(A\) whose indices are contained in the set \(\Omega\).
In addition, we write \(I_{i, :}\) simply as \(I_i\) to indicate the \(i\)-th column of the \(n\)-by-\(n\) identity matrix \(I\).
We use \(\norm{\cdot}\) for the \(2\)-norm, \(\normfro{\cdot}\) for the Frobenius norm, and \(\normA{\cdot}\) for the \(A\)-norm associated with a symmetric positive definite matrix \(A\).
The condition number \(\kappa(A)\) is defined as \(\sigma_{\max}(A)/\sigma_{\min}(A)\), where \(\sigma_{\max}(A)\) and \(\sigma_{\min}(A)\) denote the largest and smallest singular values of \(A\), respectively.
We use \(\lambda_{\max}\) and \(\lambda_{\min}\) to denote the largest and smallest eigenvalues, respectively.

%% file: Jacobi_and_GS.tex
\section{Jacobi and Gauss--Seidel methods}
\label{sec:preliminary}
In this section, we briefly review the Jacobi and GS methods.
We split \(A\) as \(A = \Lambda - L - U\), where \(\Lambda := \diag(A)\) contains the diagonal elements of \(A\), \(L\) denotes the strictly lower triangular part of \(A\), and \(U\) denotes the strictly upper triangular part.
To align with the notation used in the following sections, we assume that only one entry of \(x\) is updated in each iteration.
The iteration formulas for the Jacobi and GS methods (with successive over-relaxation) are
\begin{equation}
\begin{split}
    \text{Jacobi method:}\quad& x^{\bigl((i+1)n\bigr)} = \Lambda^{-1}(L+U)x^{(in)} + \Lambda^{-1} b, \\ 
    \text{GS method:}\quad& x^{\bigl((i+1)n\bigr)} = (\Lambda - \beta L)^{-1}\bigl((1-\beta)\Lambda + \beta U\bigr)x^{(in)} + \beta(\Lambda - \beta L)^{-1} b,
\end{split}
\end{equation}
where \(\beta\) is the relaxation factor.

For each iteration, the update can be expressed as
\begin{equation} \label{eq:component-form}
\begin{split}
    x^{(j+1)} &= x^{(j)} + \frac{\beta\, d^{(j)}\bigl(d^{(j)}\bigr)\trans (b - A\xread{j})}{\bigl(d^{(j)}\bigr)\trans Ad^{(j)}} \\
    &= x^{(j)} + \frac{\beta d^{(j)}\bigl(d^{(j)}\bigr)\trans A(x^\star - \xread{j})}{\bigl(d^{(j)}\bigr)\trans \Lambda d^{(j)}},
\end{split}
\end{equation}
where \(Ax^\star=b\), \(d^{(j)}\in \{I_1, \dotsc, I_n\}\) is the chosen update direction in the \((j+1)\)-st iteration, and \(\xread{j}\) denotes the data currently available for updating \(x^{(j+1)}\).
The parameters \(\beta\), \(d^{(j)}\), and \(\xread{j}\) are specified by
\begin{equation} \label{eq:rev:Jacobi-GS}
\begin{split}
    \text{Jacobi method:}\quad& \beta = 1,\quad d^{(j)} = I_{\mathrm{mod}\,(j+1, n)}, \quad
    \xread{j} = x^{(j-\mathrm{mod}\,(j+1, n)+1)},\\ 
    \text{GS method:}\quad& d^{(j)} = I_{\mathrm{mod}\,(j+1, n)}, \quad \xread{j} = x^{(j)},
\end{split}
\end{equation}
respectively, where \(\mathrm{mod}(\cdot, n)\) denotes the congruence modulo a given integer \(n\).

In~\eqref{eq:rev:Jacobi-GS}, for the GS method, each component of \(x\) is updated in a fixed and deterministic order from \(1\) to \(n\), always using the latest available values.
In contrast, the RGS method selects an update direction \(d^{(j)}\in\{I_1, I_2, \dotsc, I_n\}\) at random with uniform probability; that is, it uses~\eqref{eq:component-form} with
\begin{equation}
    \text{RGS method:}\quad d^{(j)}\sim \mathcal{U}(I_1, \dotsc, I_n),\quad \xread{j} = x^{(j)}.
\end{equation}

%% file: algorithm.tex
\section{Algorithms and a unified model for asynchronous methods}
\label{sec:algorithm}
In this section, we describe the asynchronous Jacobi method on different parallel architectures, and then propose a unified model for asynchronous Jacobi that applies across these diverse architectures.

\subsection{Algorithms on shared memory and distributed memory systems}
Algorithms~\ref{algo:async-Jacobi-shared} and~\ref{algo:async-Jacobi} present the asynchronous Jacobi method for different parallel architectures, specifically for shared memory and distributed memory systems, respectively.
For the sake of analysis, we impose an order \(x^{(0)}, x^{(1)}, \dotsc\) on the values of \(x\) during the computation.  
Here, \(x^{(j)}\) denotes the value of \(x\) after \(j\) updates, with each update modifying exactly one component of \(x\), as shown in~\eqref{eq:component-form}; that is, it corresponds to one loop of Algorithm~\ref{algo:async-Jacobi-shared} or~\ref{algo:async-Jacobi} executed by one processor or node.

\begin{remark}[Equivalence between asynchronous Jacobi and asynchronous RGS methods] \label{remark:AsyJ=AsyRGS}
    In~\cite[Algorithm~1]{ADG2015}, the authors present an algorithm for asynchronous RGS on shared memory systems.
    Comparing this algorithm with the asynchronous Jacobi method given in Algorithm~\ref{algo:async-Jacobi-shared}, we see that the only distinction lies in the selection of \(k\).
    In \cite[Algorithm~1]{ADG2015}, each processing unit independently chooses a random \(k\), uniformly distributed over \(\{1, 2, \dotsc, n\}\), at the beginning of each iteration.
    By contrast, Algorithm~\ref{algo:async-Jacobi-shared} selects \(k\) according to a prescribed deterministic order.
    However, in a parallel environment, even if each processing unit iteratively updates the components of \(x\) in a specific order, the overall order in which the components of \(x\) are updated across all processing units is random.
    In this sense, the algorithm for asynchronous Jacobi is the same as the algorithm for asynchronous RGS.
\end{remark}

\begin{algorithm}[!tb]
\begin{algorithmic}[1]
    \caption{Asynchronous Jacobi method on shared memory systems for each processor}
    \label{algo:async-Jacobi-shared}
    \REQUIRE
     A matrix \(A \in \mathbb R^{n\times n}\), the right-hand side \(b\in \mathbb R^{n}\), an initial guess \(x\), and \(\beta\in (0, 2)\).
    \ENSURE
    Computed vector \(x\) approximating \(Ax=b\).
    
    \WHILE{Not converge}
        \FOR{\(k = 1\), \(2\), \(\dotsc\), \(n\)}
            \STATE Update \(\gamma\gets b_k - A_{k, :}x\) by recent \(x_j\)'s read from shared memory.
            \STATE \(x_k \gets x_k + \beta\gamma/A_{k,k}\).
        \ENDFOR
    \ENDWHILE
\end{algorithmic}
\end{algorithm}

\begin{algorithm}[!tb]
\begin{algorithmic}[1]
    \caption{Asynchronous Jacobi method on distributed memory systems for each node}
    \label{algo:async-Jacobi}
    \REQUIRE
     A matrix \(A \in \mathbb R^{n\times n}\), the right-hand side \(b\in \mathbb R^{n}\), an initial guess \(x\), and \(\beta\in (0, 2)\).
     The \(i\)-th node owns \(x_{\Omega_i}\), \(b_{\Omega_i}\), and \(A_{\Omega_i, :}\), where there are \(p\) nodes and \(\{\Omega_i\}_{i=1}^p\) is a partition of the set \(\{1, 2, \dotsc, n\}\), i.e., \(\cup_{i = 1}^p \Omega_i = \{1, 2, \dotsc, n\}\).
    \ENSURE
    Computed vector \(x\) approximating \(Ax=b\).
    
    \WHILE{Not converge}
        \FOR{\(k = 1\), \(2\), \(\dotsc\), \(n\)}
        \IF{\(k\in \Omega_i\)}
            \STATE Update \(\gamma\gets b_k - A_{k, :}x\) by recent \(x_j\)'s received from all nodes.
            \STATE \(x_k \gets x_k + \beta\gamma/A_{k,k}\).
            \STATE Send \(x_k\) to every node \(j\) for which \(A_{\Omega_j,k}\neq 0\).
        \ENDIF
        \ENDFOR
    \ENDWHILE
\end{algorithmic}
\end{algorithm}

\subsection{A unified model for shared memory and distributed memory systems}
We consider two parallel computing environments with \(p\) processing units: a shared memory system with \(p\) processors and a distributed memory system with \(p\) nodes.
In the shared memory setting, all processors have access to a common address space, in which the entire matrix \(A\) and the vectors \(b\), \(x\) are stored.
Each processor can read from and write to this shared memory.
In the distributed memory setting, the data is partitioned across the \(p\) nodes.
Specifically, each node owns a subset of the rows of the matrix \(A\), along with the corresponding part of \(b\) and \(x\).

Using~\eqref{eq:component-form}, the \((j+1)\)-st iteration of the asynchronous Jacobi method can be expressed as
\begin{equation} \label{eq:iter-AsyJ}
\begin{split}
    x^{(j+1)} &= x^{(j)} + \frac{\beta d^{(j)}\gamma^{(j)}}{\bigl(d^{(j)}\bigr)\trans \Lambda d^{(j)}}
    \quad\text{with}\quad \gamma^{(j)} = \bigl(d^{(j)}\bigr)\trans A \bigl(x^\star - \xread{j}\bigr),
\end{split}
\end{equation}
where \(\Lambda = \diag(A)\) as defined in Section~\ref{sec:preliminary}, \(d^{(j)}\in \{I_1, \dotsc, I_n\}\) represents the update direction, and \(\xread{j}\) denotes the data currently available for updating \(\gamma^{(j)}\) in the \((j+1)\)-st iteration of Algorithms~\ref{algo:async-Jacobi-shared} and~\ref{algo:async-Jacobi}.
We then impose assumptions on \(d^{(j)}\), the delay bound, and \(\xread{j}\), respectively.

\begin{assumption} \label{assmp:equal}
    Each element of \(x\) is equally likely to be updated, i.e., the update direction \(d^{(j)}\) satisfying \(d^{(j)} \sim \mathcal{U}(I_{1}, \dotsc, I_{n})\).
\end{assumption}

This assumption imposes a specific random distribution over the update direction \(d^{(j)}\), even though, in practice, we do not explicitly choose a random direction uniformly from \(\{I_1, \dotsc, I_n\}\).
The same assumption is also used in the work on RGS~\cite{LL2010} and asynchronous RGS~\cite{ADG2015}.

\begin{assumption} \label{assmp:delay_bound}
    There exists a constant \(\tau\) such that, for every iteration \(j = 1, 2, \dotsc\), all updates that are at least \(\tau\) iterations old are included in the computation of iteration \(j\).
    Also, we assume that \(\tau\) does not depend on the choices \(d^{(j)}\).
    In particular, for the synchronous algorithm we have \(\tau = 0\).
\end{assumption}

The second assumption requires that the asynchronicity is bounded, which is also commonly adopted in other works analyzing provable convergence rates.

With Assumptions~\ref{assmp:equal} and~\ref{assmp:delay_bound}, the governing iteration of asynchronous Jacobi (or asynchronous RGS as discussed in Remark~\ref{remark:AsyJ=AsyRGS}) executed by all processing units is
\begin{equation} \label{eq:model}
\begin{split}
    d^{(j)} &\sim U(I_{1}, \dotsc, I_{n}), \\
    x^{\bigl(\setphij{j}\bigr)} &= x^{(0)} + \sum_{k\in\setphij{j}} \frac{\beta  d^{(k)}\gamma^{(k)}}{\bigl(d^{(k)}\bigr)\trans \Lambda d^{(k)}}, \\
    x^{(j+1)} &= x^{(j)} + \frac{\beta d^{(j)}\gamma^{(j)}}{\bigl(d^{(j)}\bigr)\trans \Lambda d^{(j)}}
    \quad\text{with}\quad \gamma^{(j)} = \bigl(d^{(j)}\bigr)\trans A \bigl(x^\star - \xread{j}\bigr),
\end{split}
\end{equation}
where the choice of \(\setphij{j}\) is determined by the specific parallel architecture.

Next, the only remaining unspecified quantity in~\eqref{eq:model} is
\(\xread{j}\). We therefore turn to the construction of \(\xread{j}\), or,
equivalently, to the definition of the index set \(\setphij{j}\).
To characterize the information that must be available when \(\xread{j}\) is
formed, we introduce an auxiliary matrix \(D^{(j)}\). The matrix \(D^{(j)}\)
identifies the components of \(x^{(j)}\) whose most recent updates must be
accessible at iteration \(j\). More precisely, \(\setphij{j}\) is required to
contain all iteration indices associated with the updates contributing to
\(\bigl(D^{(j)}\bigr)\trans x^{(j)}\). Let \(\Psi(D^{(j)})\) denote the set of
these iteration indices. We define
\[
    \setphij{j}
    =
    \tilde{\Phi}(j)\cup\Psi(D^{(j)}),
\]
where
\[
    \{0,1,\dotsc,\max\{0,j-\tau\}\}
    \subseteq
    \tilde{\Phi}(j)
    \subseteq
    \{0,1,\dotsc,j-1\},
\]
and \(\tilde{\Phi}(j)\) is independent of \(d^{(j)}\). Consequently,
\[
    \{0,1,\dotsc,\max\{0,j-\tau\}\}\cup\Psi(D^{(j)})
    \subseteq
    \setphij{j}
    \subseteq
    \{0,1,\dotsc,j-1\}.
\]
By construction, \(\xread{j}\) contains the most recent updates of all
components selected by \(D^{(j)}\), and hence
\begin{equation} \label{eq:xphitDj=xjtDj}
    \bigl(D^{(j)}\bigr)\trans \xread{j}
    =
    \bigl(D^{(j)}\bigr)\trans x^{(j)}.
\end{equation}

For the convergence analysis in Section~\ref{sec:conv}, we assume that
\(\setphij{j}\) is independent of the choices of
\(d^{(j-\tau)},\dotsc,d^{(j-1)}\). In practice, this assumption may not hold
exactly, because the availability of recent updates may depend on the selected
update directions through variations in communication efficiency among
different processing units.

The choice of \(D^{(j)}\) depends on the underlying communication architecture.
For a distributed memory system without local communication, a natural choice
is
\[
    D^{(j)}\in\{I_{\Omega_1},\dotsc,I_{\Omega_p}\}.
\]
In particular, \(D^{(j)}\) can be chosen such that
\[
    \bigl(d^{(j)}\bigr)\trans D^{(j)}\neq 0,
\]
because the node responsible for the \((j+1)\)-st iteration also owns the
components represented by
\(\bigl(D^{(j)}\bigr)\trans x^{(j)}\). Therefore, the most recent updates of
these components are always locally accessible when \(\xread{j}\) is formed,
and~\eqref{eq:xphitDj=xjtDj} holds. When local communication is allowed, the
subdomains represented by the admissible choices of \(D^{(j)}\) may overlap.

For shared memory systems, as discussed in
Remark~\ref{remark:AsyJ=AsyRGS}, the asynchronous Jacobi method coincides with
the asynchronous RGS method. In~\cite[Equation~(9)]{ADG2015}, the authors set
\(\setphij{j}=\tilde{\Phi}(j)\) and proposed a model for asynchronous RGS under
the assumption \(\Lambda=I\), so that
\[
    \bigl(d^{(j)}\bigr)\trans\Lambda d^{(j)}=1
\]
for every \(d^{(j)}\). This setting is recovered as a special case of the
unified model~\eqref{eq:model} by taking
\[
    \Psi(D^{(j)})=\emptyset,
    \qquad
    \setphij{j}=\tilde{\Phi}(j).
\]

%% file: convergence.tex
\section{Convergence analysis}
\label{sec:conv}
Existing convergence-rate theory for asynchronous RGS was developed by~\cite{ADG2015} for shared memory systems.
A direct extension of that analysis to the unified model~\eqref{eq:model}, however, encounters two difficulties.
First, the resulting bounds contain factors that grow exponentially with the delay parameter \(\tau\).
Such a dependence is particularly unfavorable for distributed memory systems, where communication delays may be much larger and hence \(\tau\) can be large.
Second, in the unified model the stale vector \(\xread{j}\) may depend on the specific choice of the current update direction \(d^{(j)}\).
Consequently, some expectation arguments used in the shared memory analysis cannot be carried over directly.
These two issues prevent a straightforward extension of the analysis of Avron et al. to the present unified model and motivate the different approach developed below.
The resulting convergence bound therefore has a substantially different form and avoids factors that grow exponentially in \(\tau\).

We next specify the communication schedule used in the probabilistic formulation.
Let
\begin{equation} \label{eq:def-schedule-new}
    \mathcal S:=\{\tilde\Phi(j)\}_{j\geq0}
\end{equation}
denote the complete baseline communication schedule, where every realization of \(\mathcal S\) satisfies Assumption~\ref{assmp:delay_bound}.
We assume that the complete sequence of update directions \(\{d^{(j)}\}_{j\geq0}\) is i.i.d. uniform and independent of \(\mathcal S\).
This assumption will be used explicitly in the proof in Section~\ref{subsec:proof-improved}.

To simplify the statement of the convergence result, we introduce
\begin{equation} \label{eq:def-Ej-Pj}
    \expectabbr{j}:=\expect{\normA{x^{(j)}-x^\star}^2}.
\end{equation}
Let \(\bar A:=\sqrt{\Lambda}^{-1}A\sqrt{\Lambda}^{-1}\), and define \(\bar A_{\rm off}=\bar A\) for the shared memory model and, for the distributed memory model, define \(\bar A_{\rm off}\) as the block off-diagonal part of \(\bar A\), namely
\begin{equation} \label{eq:def-Aoff-new}
    \begin{cases}
        \bigl(I_{\Omega_r}\bigr)\trans \bar A_{\rm off}I_{\Omega_t}
        =\bigl(I_{\Omega_r}\bigr)\trans \bar A I_{\Omega_t}, & r\neq t,\\
        \bigl(I_{\Omega_r}\bigr)\trans \bar A_{\rm off}I_{\Omega_t}=0, & r=t.
    \end{cases}
\end{equation}
Define
\begin{equation} \label{eq:def-rho-omega}
    \rho:=\frac1n\max_{t=1,\dotsc,n}\sum_{r=1}^n\abs{(\bar A_{\rm off})_{r,t}},
    \qquad
    \omega_\tau:=\sqrt{\rho\tau}+\rho\tau.
\end{equation}
The parameter \(\rho\) measures the strength of the coupling associated with the entries of \(\bar A\) that may be affected by stale information.
For the shared memory model, \(\bar A_{\rm off}=\bar A\), and hence \(\rho\) depends only on the coefficient matrix.
For the distributed memory model, \(\bar A_{\rm off}\) contains only the off-diagonal blocks associated with different nodes, and therefore \(\rho\) depends on both the coefficient matrix and the partition of the data among the nodes.
In particular, a smaller \(\rho\) indicates weaker coupling through potentially stale information and, as will be seen in Theorem~\ref{thm:improved}, leads to a weaker dependence of the convergence bound on the delay parameter \(\tau\).

\begin{theorem} \label{thm:improved}
    Let \(A\in\mathbb R^{n\times n}\) be symmetric positive definite and then define \(\bar A=\sqrt{\Lambda}^{-1}A\sqrt{\Lambda}^{-1}\).
    Suppose that the asynchronous Jacobi/RGS iteration satisfies the unified model~\eqref{eq:model} and Assumption~\ref{assmp:delay_bound}.
    Assume that the update directions are i.i.d. uniform and independent of the baseline communication schedule \(\mathcal S\) defined in~\eqref{eq:def-schedule-new}.
    Let \(\tau\geq1\), and define \(\rho\) and \(\omega_\tau\) by~\eqref{eq:def-rho-omega}.
    Assume that
    \begin{equation} \label{eq:beta-condition-new}
        2-\beta-2\beta\omega_\tau>0.
    \end{equation}
    Then, for every integer \(l_0\geq1\) and \(t\geq0\),
    \begin{equation} \label{eq:thm-improved:convrate-new}
        \expectabbr{t(\tau+l_0)}
        \leq \left(1-\frac{a_\tau c_{l_0,\tau}}
        {1+\beta\bigl(2-\beta-\beta\omega_\tau+\beta\tau\bigr)c_{l_0,\tau}
        +2\beta\sqrt{\tau c_{l_0,\tau}}} \right)^t
        \expectabbr{0},
    \end{equation}
    where
    \begin{equation} \label{eq:def-a-new}
        a_\tau:=\beta\bigl(2-\beta-2\beta\omega_\tau\bigr)>0,
    \end{equation}
    and, with \(\mu:=\lambda_{\min}(\bar A)/n\),
    \begin{equation} \label{eq:def-cl0tau-new}
        c_{l_0,\tau}
        := \frac{\mu l_0}
        {\left[
        1+\beta\sqrt{\mu l_0(\tau+l_0-1)}+\beta\omega_\tau
        \right]^2}.
    \end{equation}
\end{theorem}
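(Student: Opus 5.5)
We will work in the scaled variables \(y^{(j)}:=\sqrt{\Lambda}(x^{(j)}-x^\star)\), so that \(\normA{x^{(j)}-x^\star}^2=(y^{(j)})\trans\bar A y^{(j)}\) and, with \(d^{(j)}=I_{k_j}\), the update~\eqref{eq:model} becomes
\[
y^{(j+1)}=y^{(j)}-\beta I_{k_j}\,(\bar A\bar y^{(j)})_{k_j},
\]
where \(\bar y^{(j)}\) is the scaled stale vector built from \(\setphij{j}\).

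\textbf{Step 1 (exact one-step identity).} Expanding the \(\bar A\)-norm gives
\[
\|y^{(j+1)}\|_{\bar A}^2=\|y^{(j)}\|_{\bar A}^2-2\beta(\bar Ay^{(j)})_{k_j}(\bar A\bar y^{(j)})_{k_j}+\beta^2(\bar A\bar y^{(j)})_{k_j}^2 .
\]
We write \(\bar A\bar y^{(j)}=\bar Ay^{(j)}-\bar A(y^{(j)}-\bar y^{(j)})\). Because of~\eqref{eq:xphitDj=xjtDj}, the difference \(y^{(j)}-\bar y^{(j)}\) vanishes on the components selected by \(D^{(j)}\). Hence the row \(k_j\) acting on it only sees \(\bar A_{\rm off}\), and it involves only the at most \(\tau\) most recent updates \(s\in\{j-\tau,\dots,j-1\}\setminus\setphij{j}\), each of size \(\beta|(\bar A\bar y^{(s)})_{k_s}|\).

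\textbf{Step 2 (expectation over \(d^{(j)}\)).} We condition on \(\mathcal S\) and on \(d^{(0)},\dots,d^{(j-1)}\). Since \(\tilde\Phi(j)\) is independent of \(d^{(j)}\), only the \(\Psi(D^{(j)})\) part depends on \(k_j\), and it only removes error terms. Averaging over \(k_j\) then gives a \(\tfrac1n\|\bar Ay^{(j)}\|^2\) term. The cross terms are bounded by \(\tfrac1n\sum_s\beta|(\bar A\bar y^{(s)})_{k_s}|\,|(\bar A_{\rm off})_{:,k_s}|\trans|\bar A y^{(j)}|\). Using the column sum bound \(\sum_r|(\bar A_{\rm off})_{r,t}|\le n\rho\) together with Young/Cauchy--Schwarz with a tuned weight produces the two pieces \(\sqrt{\rho\tau}\) and \(\rho\tau\) of \(\omega_\tau\). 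The outcome is an inequality of the form
\[
\expectabbr{j+1}\le\expectabbr{j}-\frac{\beta(2-\beta-2\beta\omega_\tau)}{n}\,\mathbb E\|\bar A y^{(j)}\|^2+(\text{delay terms in }g_s:=\mathbb E(\bar A\bar y^{(s)})_{k_s}^2,\ s\in[j-\tau,j)),
\]
with the delay terms absorbed or telescoped. This is where the condition~\eqref{eq:beta-condition-new} enters: it gives \(a_\tau>0\).

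\textbf{Step 3 (block of \(\tau+l_0\) iterations).} We sum Step 2 over a window of length \(\tau+l_0\) starting at \(m=t(\tau+l_0)\). The aim is to lower bound the accumulated decrease \(\sum g_s\) by \(\expectabbr{m}\). For this we use \(\|\bar Ay\|^2\ge\lambda_{\min}(\bar A)\|y\|_{\bar A}^2=n\mu\|y\|_{\bar A}^2\) at the last \(l_0\) iterates. We then relate \(y^{(j)}\) to \(y^{(m)}\) through the at most \(\tau+l_0-1\) intermediate updates, each bounded by \(\beta\sqrt{g_s}\), with Cauchy--Schwarz over the window. This should produce \(\sqrt{\expectabbr{m}}\le(1+\beta\sqrt{\mu l_0(\tau+l_0-1)}+\beta\omega_\tau)\cdot\sqrt{(\text{decrease})/(\mu l_0)}\), i.e. decrease \(\ge c_{l_0,\tau}\expectabbr{m}\). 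The denominator \(1+\beta(2-\beta-\beta\omega_\tau+\beta\tau)c+2\beta\sqrt{\tau c}\) should come from reinserting the delay terms carried from the previous window, which are again of size \(\tau\) and \(\sqrt{\tau}\). Solving the resulting inequality for \(\expectabbr{m+\tau+l_0}\) gives one contraction factor, and iterating over \(t\) finishes the proof.

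\textbf{Main obstacle.} The hardest part is Step 3: bookkeeping of the stale terms that straddle window boundaries, and making the constants close exactly into \(c_{l_0,\tau}\). The independence of the stale vector from \(d^{(j)}\) fails only through \(\Psi\), and that failure is harmless. The first thing we would try is a potential function \(\expectabbr{j}+\beta\theta\sum_{s=j-\tau}^{j-1}g_s\), which absorbs the delay terms; we would then optimize \(\theta\).
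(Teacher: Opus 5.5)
Your outline matches the paper's proof in its overall shape: unit-diagonal scaling, splitting the stale read into the current residual plus an off-block correction from at most $\tau$ missing updates, a delay-debt potential, and a block-progress bound over the last $l_0$ iterates of a window. But the two steps that actually produce the stated constants are missing, and the route you indicate for the first would not produce them.

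\textbf{Step 2.} You bound the cross term with absolute values, one missing update $s$ at a time. The natural way to finish from there is $|(\bar A_{\rm off})_{:,k_s}|\trans|\bar A y^{(j)}|\le\sqrt{n\rho}\,\norm{\bar Ay^{(j)}}$, then Young over the up to $\tau$ missing updates, with each $\expect{(\gamma^{(s)})^2}$ charged by up to $\tau$ later reads. That gives a penalty of order $\beta^2\tau\sqrt\rho$ in place of $2\beta^2\omega_\tau$ in $a_\tau$, which is about $\sqrt\tau$ times worse when $\rho\tau=O(1)$. Taking absolute values destroys the cancellation between the random fluctuations of different missing updates, and that cancellation is exactly where $\sqrt{\rho\tau}$ comes from.

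The paper proceeds as follows.
\begin{itemize}
\item It writes $\gamma^{(k)}=(d^{(k)})\trans g^{(k)}$ with $g^{(k)}=Ae^{(k)}+A_{\rm off}z^{(k)}$, which is independent of $d^{(k)}$.
\item It splits $d^{(k)}\gamma^{(k)}=\frac1n g^{(k)}+\xi^{(k)}$ with $\expect{\xi^{(k)}\mid\mathcal F^{(k)}}=0$.
\item Martingale differences are orthogonal, so $\frac1n\expect{\norm{A_{\rm off}\sum_k\xi^{(k)}}^2}\le\rho\sum_k\expect{(\gamma^{(k)})^2}$, with no factor $\tau$.
\item Only the drift $\frac1n\sum_k g^{(k)}$ pays for the number of terms, via $\norm{A_{\rm off}}\le n\rho$, and it contributes $\rho^2\tau$.
\end{itemize}
Together these give $\frac1n\expect{\norm{A_{\rm off}z^{(j)}}^2}\le\beta^2\rho(1+\sqrt{\rho\tau})^2\sum_{k\in\tilde\Phi^-(j)}\expect{(\gamma^{(k)})^2}$. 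Young's inequality with weight $\beta\omega_\tau$ then gives the debt coefficient $\beta^2\omega_\tau/\tau$, and the block version of the same estimate is what puts $\beta\omega_\tau$ into $c_{l_0,\tau}$.

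Two further points on this step. First, the decrease must be measured in $\expect{(\gamma^{(j)})^2}$, the same quantity the debt is charged in, not in $\frac1n\expect{\norm{\bar Ay^{(j)}}^2}$. Second, your potential with a uniform weight on the last $\tau$ terms is not monotone: each step removes only the oldest term, while the debt charges up to $\tau-1$ recent ones. The weights must be $m^{(j)}(k)$, the number of future reads that will still miss update $k$. Then $m^{(j+1)}(k)-m^{(j)}(k)=-\mathbf 1_{\{k\in\tilde\Phi^-(j)\}}$, which gives $\Etemp{j+1}\le\Etemp{j}-a_\tau\expect{(\gamma^{(j)})^2}$ with exactly $a_\tau=\beta(2-\beta-\beta\omega_\tau)-\beta^2\omega_\tau$.

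\textbf{Step 3.} The progress bound $\sum_{k=m}^{m+l-1}\expect{(\gamma^{(k)})^2}\ge c_{l_0,\tau}\expectabbr{m}$ is in terms of $\expectabbr{m}$. The quantity that decreases, however, is the potential, and $\Etemp{m}\ge\expectabbr{m}$ can be much larger when most of it is pending debt. Moreover, $\expectabbr{m}$ alone obeys no closed recursion, so ``solving for $\expectabbr{m+\tau+l_0}$'' is not available.

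The paper closes this with a case analysis, writing $c=c_{l_0,\tau}$.
\begin{itemize}
\item If $\expectabbr{j}\ge\theta\Etemp{j}$, the block gives the factor $1-a_\tau c\theta$.
\item Otherwise, if the first $\tau$ steps make progress at least $\varepsilon\Etemp{j}$, the factor is $1-a_\tau\varepsilon$.
\item Otherwise the iterate barely moves, so $\expectabbr{j+\tau}\le(\sqrt\theta+\beta\sqrt{\tau\varepsilon})^2\Etemp{j}$. By then all old debt has expired, so $\Etemp{j+\tau}\le\expectabbr{j+\tau}+\beta^2\omega_\tau\sum_{k=j}^{j+\tau-1}\expect{(\gamma^{(k)})^2}$.
\end{itemize}
Balancing the three factors with $\varepsilon=c\theta$ yields exactly the stated denominator. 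There $a_\tau+\beta^2(\tau+\omega_\tau)=\beta(2-\beta-\beta\omega_\tau+\beta\tau)$, and $2\beta\sqrt{\tau c}$ is the cross term from the iterate's movement over the first $\tau$ steps. It does not come from delay terms carried over from the previous window.
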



In the following, we first provide a discussion in Section~\ref{subsec:discussion-improved}, followed by a proof of this theorem in Section~\ref{subsec:proof-improved}.

\subsection{Discussion of Theorem~\ref{thm:improved}}
\label{subsec:discussion-improved}
Theorem~\ref{thm:improved} establishes a linear convergence rate over blocks of \(\tau+l_0\) iterations.
The effect of asynchronicity is characterized by
\(\omega_\tau=\sqrt{\rho\tau}+\rho\tau\), rather than by factors that grow exponentially with \(\tau\).
In particular, if \(\rho\tau=\bigO(1)\), then \(\omega_\tau=\bigO(1)\).
Therefore, the effect of stale information remains bounded as the delay increases, provided that the coupling parameter \(\rho\) decreases proportionally to \(1/\tau\).
In particular, the choice \(l_0=\tau\) leads to a simple explicit contraction over blocks of \(2\tau\) iterations, as summarized in the following corollary.

\begin{corollary} \label{cor:fixedl0-new}
    Under the assumptions of Theorem~\ref{thm:improved}, suppose in addition that
    \begin{equation} \label{cor:eq:assump1}
        \beta^2\mu\tau^2\leq\frac12
    \end{equation}
    and
    \begin{equation} \label{cor:eq:assump2}
        \beta\bigl(2-\beta-\beta\omega_\tau+\beta\tau\bigr)
        \mu\tau\leq1.
    \end{equation}
    Then, by choosing \(l_0=\tau\),
    \begin{equation} \label{eq:cor-improved:convrate-new}
        \expectabbr{2t\tau}
        \leq \left(1-\frac{a_\tau\mu\tau}{36}\right)^t \expectabbr{0}
        \quad\text{with}\quad
        a_\tau=\beta\bigl(2-\beta-2\beta\omega_\tau\bigr).
    \end{equation}
\end{corollary}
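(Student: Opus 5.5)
The corollary follows from Theorem~\ref{thm:improved} by specializing to \(l_0=\tau\) and then bounding \(c_{\tau,\tau}\) from below and the denominator of the rate from above. With \(l_0=\tau\), the block length is \(\tau+l_0=2\tau\), so \eqref{eq:thm-improved:convrate-new} already controls \(\expectabbr{2t\tau}\). We have
\[
c_{\tau,\tau}=\frac{\mu\tau}{\bigl[1+\beta\sqrt{\mu\tau(2\tau-1)}+\beta\omega_\tau\bigr]^2}.
\]
Since the map \(c\mapsto a_\tau c/(1+Bc+2\beta\sqrt{\tau c})\) is increasing in \(c\), it suffices to show two things: a lower bound \(c_{\tau,\tau}\geq \mu\tau/K_1\), and an upper bound \(1+B c_{\tau,\tau}+2\beta\sqrt{\tau c_{\tau,\tau}}\leq K_2\). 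Here \(B:=\beta(2-\beta-\beta\omega_\tau+\beta\tau)\), and the constants \(K_1,K_2\) should satisfy \(K_1K_2\leq 36\).

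For the lower bound on \(c_{\tau,\tau}\), I would use \(\mu\tau(2\tau-1)\leq 2\mu\tau^2\) together with \eqref{cor:eq:assump1}. This gives \(\beta\sqrt{\mu\tau(2\tau-1)}\leq\sqrt{2\beta^2\mu\tau^2}\leq 1\). Next, I would bound \(\beta\omega_\tau\) using \eqref{eq:beta-condition-new}, which gives \(2\beta\omega_\tau<2-\beta\leq 2\), so \(\beta\omega_\tau<1\). Together these make the bracket at most \(3\), and hence \(c_{\tau,\tau}\geq\mu\tau/9\).

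For the denominator, I would bound each term separately. The term \(Bc_{\tau,\tau}\) is at most \(B\mu\tau\leq1\) by \eqref{cor:eq:assump2}, using \(c_{\tau,\tau}\leq\mu\tau\); this requires \(B\geq0\), which holds since \(2-\beta-\beta\omega_\tau>0\). The term \(2\beta\sqrt{\tau c_{\tau,\tau}}\) is at most \(2\beta\sqrt{\mu\tau^2}=2\sqrt{\beta^2\mu\tau^2}\leq 2/\sqrt2=\sqrt2\). The denominator is therefore at most \(2+\sqrt2<4\). The contraction factor is then at least \(a_\tau(\mu\tau/9)/4=a_\tau\mu\tau/36\), which gives \eqref{eq:cor-improved:convrate-new}.

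The main point to check is the monotonicity step. The lower bound on \(c\) and the upper bound on the denominator must be applied consistently. I would use the upper bound \(c_{\tau,\tau}\leq\mu\tau\) only inside the denominator, and the lower bound \(c_{\tau,\tau}\geq\mu\tau/9\) only in the numerator. This split is valid because the numerator \(a_\tau c\) with \(a_\tau>0\) and the denominator bound are treated separately, with no circularity. Everything else is arithmetic with the constants, and the factor \(36=9\cdot4\) has some slack.
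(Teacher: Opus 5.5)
Your proposal is correct and follows the paper's proof essentially step for step. You bound the bracket in \(c_{\tau,\tau}\) by \(3\), using \(\beta\omega_\tau<1\) and \(\beta\sqrt{\mu\tau(2\tau-1)}\le\sqrt{2\beta^2\mu\tau^2}\le1\), to get \(c_{\tau,\tau}\ge\mu\tau/9\); you then use \(c_{\tau,\tau}\le\mu\tau\) with the two extra assumptions to bound the denominator by \(2+\sqrt2<4\). Your explicit check that \(\beta(2-\beta-\beta\omega_\tau+\beta\tau)\ge0\), which is needed to apply \(c_{\tau,\tau}\le\mu\tau\) to that term, is a small point the paper leaves implicit, and the monotonicity remark is unnecessary because you bound the numerator and denominator directly.
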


\begin{proof}
    Since \(l_0=\tau\),
    \[
        c_{\tau,\tau}
        = \frac{\mu\tau}
        {\left(1+\beta\sqrt{\mu\tau(2\tau-1)}
        +\beta\omega_\tau \right)^2}.
    \]
    The assumption~\eqref{eq:beta-condition-new} implies \(\beta\omega_\tau<1\), while the assumption~\eqref{cor:eq:assump1} gives
    \begin{equation} \label{eq:cor-proof:beta-sqrt-mubeta}
        \beta\sqrt{\mu\tau(2\tau-1)}
        \leq \sqrt{2\beta^2\mu\tau^2}
        \leq 1.
    \end{equation}
    Hence
    \begin{equation} \label{eq:cor-proof:bound-c}
        c_{\tau,\tau}\geq\frac{\mu\tau}{9}.
    \end{equation}
    Moreover, since \(c_{\tau,\tau}\leq\mu\tau\), \eqref{cor:eq:assump2} implies
    \[
        \beta\bigl(2-\beta-\beta\omega_\tau+\beta\tau\bigr)
        c_{\tau,\tau}\leq1
    \]
    and~\eqref{eq:cor-proof:beta-sqrt-mubeta} implies
    \[
        2\beta\sqrt{\tau c_{\tau,\tau}}
        \leq \sqrt{2}\cdot\sqrt{2\beta^2\mu\tau^2}
        \leq\sqrt2<2.
    \]
    Thus, the denominator in~\eqref{eq:thm-improved:convrate-new} is smaller than \(4\).
    Together with~\eqref{eq:cor-proof:bound-c}, the result follows directly from Theorem~\ref{thm:improved}.
\end{proof}

\begin{remark}[Asymptotic interpretation of the convergence rate]
\label{rem:asymptotic-new}
    Suppose that \(\rho\tau=\bigO(1)\) and that the assumptions of Corollary~\ref{cor:fixedl0-new} hold.
    Since \(\omega_\tau=\sqrt{\rho\tau}+\rho\tau=\bigO(1)\), a natural choice of the relaxation parameter can be obtained by maximizing the coefficient
    \[
        a_\tau(\beta)
        = \beta\bigl(2-\beta-2\beta\omega_\tau\bigr).
    \]
    This gives
    \begin{equation} \label{eq:beta-star-new}
        \beta^\star
        = \frac{1}{1+2\omega_\tau},
        \qquad a_\tau(\beta^\star)
        = \frac{1}{1+2\omega_\tau}.
    \end{equation}
    In particular, \(\beta^\star=\Theta(1)\) and \(a_\tau(\beta^\star)=\Theta(1)\) when \(\rho\tau=\bigO(1)\).

    With \(l_0=\tau\), Corollary~\ref{cor:fixedl0-new} gives a contraction factor over \(2\tau\) iterations of the form 
    \[
        1-\Theta\bigl(a_\tau\mu\tau\bigr).
    \]
    Since the block length is \(2\tau\), the corresponding effective per-iteration contraction factor is
    \[
        \left(1-\Theta\bigl(a_\tau\mu\tau\bigr)\right)^{1/(2\tau)}
        = e^{-\Theta(a_\tau\mu\tau)}
        = 1-\Theta\bigl(a_\tau\mu\bigr).
    \]
    For the choice \(\beta=\beta^\star\), this becomes
    \[
        1-\Theta\left(\frac{\mu}{1+2\omega_\tau}\right)
        = 1-\Theta\left(
        \frac{\lambda_{\min}(\bar A)}
        {n\bigl(1+2\sqrt{\rho\tau}+2\rho\tau\bigr)}
        \right).
    \]
    Hence, when \(\rho\tau=\bigO(1)\), the guaranteed asynchronous rate has the same asymptotic order in \(n\) as the synchronous RGS rate.
\end{remark}

\subsection{Proof of Theorem~\ref{thm:improved}}
\label{subsec:proof-improved}
We now prove Theorem~\ref{thm:improved}.
Throughout this subsection, we fix an arbitrary realization of the communication schedule \(\mathcal S\) that satisfies the bounded-delay assumption.
Conditional on this fixed schedule, the baseline read sets \(\tilde\Phi(j)\), the missing-update sets
\begin{equation} \label{eq:def-Phi-minus-new}
    \tilde{\Phi}^{-}(j)
    :=\{k<j:k\notin\tilde\Phi(j)\}
    =\{0,1,\dotsc,j-1\}\setminus\tilde\Phi(j),
\end{equation}
and the future missing counts \(m^{(j)}(k)\) introduced below are deterministic.
The only randomness in the following analysis therefore comes from the random update directions \(d^{(0)},d^{(1)},\dotsc\), which remain independent and uniformly distributed after conditioning on \(\mathcal S\).
By Assumption~\ref{assmp:delay_bound},
\[
    \tilde{\Phi}^{-}(j)
    \subseteq\{\max\{0,j-\tau+1\},\dotsc,j-1\},
    \qquad
    \abs{\tilde{\Phi}^{-}(j)}\leq\tau.
\]
For simplicity, we suppress the conditioning on \(\mathcal S\) from the notation.
Since all estimates below are uniform over admissible realizations of \(\mathcal S\), the unconditional bounds follow by subsequently averaging over the communication schedule.
Notice that the iteration of Algorithm~\ref{algo:async-Jacobi}, i.e., \eqref{eq:model}, is mathematically equivalent to
\begin{equation}
\begin{split}
    \bar{\gamma}^{(j)} &= \bigl(d^{(j)}\bigr)\trans \bar{A}\bigl(\bar{x}^\star - \bar{x}^{(\setphij{j})}\bigr) = \bigl(d^{(j)}\bigr)\trans\sqrt{\Lambda}d^{(j)}\gamma^{(j)}, \\
    \bar{x}^{(j+1)} &= \bar{x}^{(j)} + \beta d^{(j)}\bar{\gamma}^{(j)} = \sqrt{\Lambda}x^{(j+1)}
\end{split}
\end{equation}
with \(\bar{A} = \sqrt{\Lambda}^{-1}A\sqrt{\Lambda}^{-1}\) and \(\bar{A}\bar{x}^\star = \sqrt{\Lambda}^{-1}b\), which solves \(\bar{A}\bar{x}=\sqrt{\Lambda}^{-1}b\).
Together with the definition of \(\bar{A}\), we have
\begin{equation}
    \normA{x^{(j)} - x^\star}^2
    = \norm{\bar{x}^{(j)} - \bar{x}^\star}_{\bar{A}}^2
    \quad\text{and}\quad
    \normA{\xread{j} - x^\star}^2
    = \norm{\bar{x}^{(\setphij{j})} - \bar{x}^\star}_{\bar{A}}^2.
\end{equation}
For \(\bar{A} = \sqrt{\Lambda}^{-1} A \sqrt{\Lambda}^{-1}\), where \(A\) is symmetric positive definite, it follows directly that \(\bar{A}\) is also symmetric positive definite and has unit entries on its diagonal.
Consequently, every second-order leading principal minor of \(\bar{A}\) is strictly positive, which yields \(\bar{A}_{i,j} < 1\) with \(i\neq j\) and therefore \(\norm{\bar{A}} \le \normfro{\bar{A}} < n\).
Thus, \textit{without loss of generality, we assume that \(A_{k,k} = 1\) for all \(1\leq k\leq n\) in the following proof.}
We likewise write \(A_{\rm off}\) for \(\bar A_{\rm off}\), set \(A_d:=A-A_{\rm off}\), and keep the same symbols \(\rho\) and \(\mu=\lambda_{\min}(A)/n\).
For the distributed memory model, \(A_d\) is the block diagonal part of \(A\); for the shared memory model, \(A_d=0\).

Before giving the details, we briefly summarize the proof strategy as follows:
\begin{enumerate}
\item Step~1 derives an estimate for the error caused by stale information over a group of consecutive iterations.
\item Step~2 uses this estimate to construct a modified error quantity that decreases at every iteration despite the presence of delayed updates.
\item Step~3 shows that a sufficient amount of progress must be made over a block of \(\tau+l_0\) iterations.
\item Step~4 combines these results to obtain a contraction over each block.
\item Finally, Step~5 optimizes the resulting bound and proves Theorem~\ref{thm:improved}.
\end{enumerate}

\paragraph{Step 1: Controlling the effect of stale information.}
The main difficulty caused by asynchronicity is that an update may be computed using values from different past iterations.
We first express the discrepancy caused by the missing updates and then estimate its contribution to the residual.
Let
\begin{equation} \label{eq:def-e-z-g}
    e^{(j)}:=x^\star-x^{(j)}.
\end{equation}
Since
\[
    x^{(j)}=x^{(0)}+\beta\sum_{k=0}^{j-1}d^{(k)}\gamma^{(k)},
\]
the updates contained in the current iterate but missing from the baseline read are exactly those indexed by \(\tilde{\Phi}^{-}(j)\).
We therefore define
\begin{equation} \label{eq:z-missing-sum}
    z^{(j)}:=\beta\sum_{k\in\tilde{\Phi}^{-}(j)}d^{(k)}\gamma^{(k)}.
\end{equation}
Thus, the value used for the potentially stale part of the residual is the current iterate minus \(z^{(j)}\).
For the distributed memory model, the entries corresponding to \(A_d\) are current, whereas those corresponding to \(A_{\rm off}\) may be stale.
Hence \(\gamma^{(j)}\) can be written as
\[
    \gamma^{(j)}=\bigl(d^{(j)}\bigr)\trans\left[A_d e^{(j)}+A_{\rm off}\bigl(e^{(j)}+z^{(j)}\bigr)\right].
\]
Using \(A=A_d+A_{\rm off}\), this becomes
\begin{equation} \label{eq:assembled-gamma}
    \gamma^{(j)}=\bigl(d^{(j)}\bigr)\trans g^{(j)}, \qquad g^{(j)}:=Ae^{(j)}+A_{\rm off}z^{(j)}.
\end{equation}
Both \(e^{(j)}\) and \(z^{(j)}\), and therefore \(g^{(j)}\), depend only on the communication schedule and on the update directions before iteration~\(j\).
Hence \(g^{(j)}\) is independent of the current direction \(d^{(j)}\).
Since \(d^{(j)}\) is uniformly distributed over the coordinate directions, we have
\[
    \expect{d^{(j)}\bigl(d^{(j)}\bigr)\trans}=\frac1n I.
\]
Using~\eqref{eq:assembled-gamma}, we therefore have
\begin{equation} \label{eq:gamma-second-moment}
    \expect{(\gamma^{(j)})^2}
    =\expect{\bigl(g^{(j)}\bigr)\trans d^{(j)}\bigl(d^{(j)}\bigr)\trans g^{(j)}}
    =\frac1n\expect{\norm{g^{(j)}}^2}.
\end{equation}

Before proceeding with the analysis, we first present a triangle inequality in expectation that will be used.

\begin{lemma} \label{lem:triangle-ineq-new}
    For random vectors \(x,y\in\mathbb R^n\),
    \begin{equation} \label{eq:triangle-ineq-new-Anorm}
        \sqrt{\expect{\normA{x}^2}}-\sqrt{\expect{\normA{y}^2}}
        \leq \sqrt{\expect{\normA{x+y}^2}}
        \leq\sqrt{\expect{\normA{x}^2}}+\sqrt{\expect{\normA{y}^2}}.
    \end{equation}
    and
    \begin{equation} \label{eq:triangle-ineq-new}
        \sqrt{\expect{\norm{x}^2}}-\sqrt{\expect{\norm{y}^2}}
        \leq \sqrt{\expect{\norm{x+y}^2}}
        \leq\sqrt{\expect{\norm{x}^2}}+\sqrt{\expect{\norm{y}^2}}.
    \end{equation}
\end{lemma}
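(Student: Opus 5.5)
The plan is to recognize \(\sqrt{\expect{\norm{\cdot}^2}}\) as a norm on the space of square-integrable random vectors and apply the ordinary triangle inequality there. Concretely, for random vectors \(u,v\) with finite second moments, I will use the inner product \(\langle u,v\rangle_{L^2}:=\expect{u\trans v}\). It is bilinear, symmetric and positive semidefinite, and its induced seminorm is \(\|u\|_{L^2}=\sqrt{\expect{\norm{u}^2}}\). For the \(A\)-norm version I use instead \(\langle u,v\rangle_{A}:=\expect{u\trans A v}\), which has the same properties because \(A\) is symmetric positive definite. Its induced seminorm is \(\sqrt{\expect{\normA{u}^2}}\).

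The key step is the Cauchy--Schwarz inequality \(\abs{\expect{u\trans v}}\le\sqrt{\expect{\norm{u}^2}}\sqrt{\expect{\norm{v}^2}}\). I can obtain it either from the standard argument for positive semidefinite bilinear forms, or directly: first apply pointwise Cauchy--Schwarz \(\abs{u\trans v}\le\norm{u}\norm{v}\), then apply the scalar Cauchy--Schwarz inequality for expectations to \(\expect{\norm{u}\norm{v}}\). For the \(A\)-weighted version I either substitute \(u\mapsto A^{1/2}u\) and \(v\mapsto A^{1/2}v\), or use \(\abs{u\trans Av}\le\normA{u}\normA{v}\) pointwise. With this in hand, the upper bound follows by expanding
\[
    \expect{\normA{x+y}^2}=\expect{\normA{x}^2}+2\expect{x\trans Ay}+\expect{\normA{y}^2}
    \le\Bigl(\sqrt{\expect{\normA{x}^2}}+\sqrt{\expect{\normA{y}^2}}\Bigr)^2
\]
and taking square roots. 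The Euclidean case~\eqref{eq:triangle-ineq-new} is the special case \(A=I\).

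For the lower bound, I apply the upper bound to the decomposition \(x=(x+y)+(-y)\). Since \(\normA{-y}=\normA{y}\), this gives \(\sqrt{\expect{\normA{x}^2}}\le\sqrt{\expect{\normA{x+y}^2}}+\sqrt{\expect{\normA{y}^2}}\), and rearranging yields the claim.

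The only point needing care is integrability. If either second moment on the right-hand side is infinite, the upper bound is trivial. The lower bound then needs the convention that the left side is read in the extended reals, or else the implicit assumption that all moments are finite. I will state that all quantities are assumed finite, which is the situation in the subsequent application to the iterates. There is no genuine obstacle otherwise.
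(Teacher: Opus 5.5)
Your proposal is correct and takes the same approach as the paper, which simply notes that \(\sqrt{\mathbb{E}[\|z\|_A^2]}\) is a norm on square-integrable random vectors and applies its triangle inequality. You also write out the Cauchy--Schwarz step, derive the lower bound from the decomposition \(x=(x+y)+(-y)\), and state the finiteness caveat; the paper leaves all three implicit.
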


\begin{proof}
    The mapping \(\|z\|_{L^2(A)}:=\sqrt{\expect{\normA{z}^2}}\) is a norm.
    Hence the triangle inequality gives~\eqref{eq:triangle-ineq-new-Anorm}.
    We can similarly prove~\eqref{eq:triangle-ineq-new}.
\end{proof}

Then we are ready to show the main result of this step.

\begin{lemma} \label{lem:stale-energy-new}
    For every \(j\geq0\),
    \begin{equation} \label{eq:stale-energy-one-new}
        \frac1n\expect{\norm{A_{\rm off}z^{(j)}}^2}
        \leq\beta^2\rho\bigl(1+\sqrt{\rho\tau}\bigr)^2\sum_{k\in\tilde{\Phi}^{-}(j)}\expect{(\gamma^{(k)})^2}.
    \end{equation}
    Furthermore, for every \(j\geq0\) and integer \(l_0\geq1\),
    \begin{equation} \label{eq:stale-energy-block-new}
        \frac1n\sum_{i=j+\tau}^{j+\tau+l_0-1}\expect{\norm{A_{\rm off}z^{(i)}}^2}\leq\beta^2\omega_\tau^2\sum_{k=j}^{j+\tau+l_0-1}\expect{(\gamma^{(k)})^2}.
    \end{equation}
\end{lemma}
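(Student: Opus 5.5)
The plan is to prove \eqref{eq:stale-energy-one-new} by expanding \(A_{\rm off}z^{(j)}\) column by column and controlling diagonal and cross terms separately. Then \eqref{eq:stale-energy-block-new} should follow by summing over the block and counting how often each index \(k\) appears in a missing set.

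\emph{Setup.} Write \(d^{(k)}=I_{i_k}\) and let \(a_t\) denote the \(t\)-th column of \(A_{\rm off}\). Then \(A_{\rm off}z^{(j)}=\beta\sum_{k\in\tilde\Phi^-(j)}\gamma^{(k)}a_{i_k}\). Let \(M\) be the entrywise absolute value of \(A_{\rm off}\). Since \(A_{k,k}=1\) and \(\abs{A_{r,t}}<1\) off the diagonal, every column of \(M\) satisfies \(\norm{a_t}^2\le\max_r\abs{A_{r,t}}\sum_r\abs{A_{r,t}}\le n\rho\). In addition, \(\sum_{s}(M\trans M)_{t,s}\le\max_t\sum_r M_{r,t}\cdot\max_r\sum_s M_{r,s}\le(n\rho)^2\) by symmetry of \(A\).

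\emph{Two pieces.} The target constant is
\[
\rho\bigl(1+\sqrt{\rho\tau}\bigr)^2=\bigl(\sqrt{\rho}+\sqrt{\rho^2\tau}\bigr)^2 .
\]
This shape suggests using Lemma~\ref{lem:triangle-ineq-new} (the \(L^2\) triangle inequality) on a splitting of \(\norm{A_{\rm off}z^{(j)}}\) into a ``self'' part of size \(\sqrt{n\rho\sum\expect{(\gamma^{(k)})^2}}\) and an ``interaction'' part of size \(\sqrt{n\rho^2\tau\sum\expect{(\gamma^{(k)})^2}}\), both up to the factor \(\beta\).
\begin{itemize}
\item The self part comes from the diagonal terms \(\sum_k(\gamma^{(k)})^2\norm{a_{i_k}}^2\). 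The column bound \(\norm{a_t}^2\le n\rho\) gives \(n\rho\sum_k\expect{(\gamma^{(k)})^2}\).
\item For the interaction part I would bound the cross terms \(\sum_{k\ne l}\abs{\gamma^{(k)}\gamma^{(l)}}(M\trans M)_{i_k,i_l}\). For a pair \(k<l\), the key probabilistic fact is that \(i_l\) is uniform and independent of \((\gamma^{(k)},i_k)\). This holds because \(\gamma^{(k)}=(d^{(k)})\trans g^{(k)}\) depends only on directions up to \(k\) and on the fixed schedule \(\mathcal S\). Averaging over \(i_l\) gives \(\frac1n\sum_s(M\trans M)_{i_k,s}\le n\rho^2\), and there are at most \(\tau\) partners. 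Combined with Cauchy--Schwarz, each index should contribute \(\sqrt{n\rho^2\tau}\) in \(L^2\).
\end{itemize}
Adding the two pieces in \(L^2\) norm, squaring, and dividing by \(n\) should yield \eqref{eq:stale-energy-one-new}.

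\emph{Main obstacle.} The hard step is the cross term involving \((\gamma^{(l)})^2\) for the later index \(l\). Here \(\gamma^{(l)}\) depends on \(i_k\) through \(g^{(l)}\), so one cannot average over \(i_k\) freely. What I would try first is to organise the cross sum asymmetrically. I would bound \(\norm{\sum_k\gamma^{(k)}a_{i_k}}\) through \(\norm{Mv}\) with \(v=\sum_k\abs{\gamma^{(k)}}I_{i_k}\), and expand \(\norm{Mv}^2\) so that every cross product is charged to the \emph{earlier} factor \(\abs{\gamma^{(k)}}\). The later index is then only a uniform independent column index, so only averages over later directions are ever taken. Failing that, I would apply Young's inequality with weight \(\sqrt{\rho\tau}\), chosen exactly so that the diagonal, cross, and squared-interaction contributions recombine into \((1+\sqrt{\rho\tau})^2\).

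\emph{Block estimate.} For \eqref{eq:stale-energy-block-new}, sum \eqref{eq:stale-energy-one-new} over \(i=j+\tau,\dotsc,j+\tau+l_0-1\). Each \(k\in\tilde\Phi^-(i)\) satisfies \(i-\tau+1\le k\le i-1\), so every such \(k\) lies in \(\{j,\dotsc,j+\tau+l_0-1\}\). Moreover, a fixed \(k\) belongs to \(\tilde\Phi^-(i)\) for at most \(\tau\) values of \(i\). Exchanging the order of summation therefore gives the factor \(\tau\beta^2\rho(1+\sqrt{\rho\tau})^2=\beta^2(\sqrt{\rho\tau}+\rho\tau)^2=\beta^2\omega_\tau^2\), which is the claimed bound.
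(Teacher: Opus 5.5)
Your setup is correct: the column bound $\lVert a_t\rVert^2\le n\rho$, the row-sum bound $\sum_s(M^{\top}M)_{t,s}\le(n\rho)^2$, and the independence of $i_l$ from $\mathcal F^{(l)}$ (the information in $\mathcal S,d^{(0)},\dots,d^{(l-1)}$). Your derivation of \eqref{eq:stale-energy-block-new} from \eqref{eq:stale-energy-one-new} is also correct and is exactly the paper's counting argument. The gap is in \eqref{eq:stale-energy-one-new}.

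Expanding $\lVert A_{\rm off}z^{(j)}\rVert^2$ into diagonal and off-diagonal terms splits the \emph{squared norm}, not the vector. So Lemma~\ref{lem:triangle-ineq-new} cannot be applied to your ``self'' and ``interaction'' pieces. Instead you would have to show directly that, up to $\beta^2$, the expected off-diagonal sum divided by $n$ is at most $(2\rho\sqrt{\rho\tau}+\rho^2\tau)R_j$, where $R_j:=\sum_{k\in\tilde\Phi^{-}(j)}\mathbb{E}[(\gamma^{(k)})^2]$. Your pairwise argument does not give the $\sqrt{\tau}$ in the middle term. For $k<l$ the later factor $\gamma^{(l)}=(g^{(l)})_{i_l}$ depends on $i_l$, so you cannot average $(M^{\top}M)_{i_k,i_l}$ over $i_l$ while holding $\lvert\gamma^{(l)}\rvert$ fixed. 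Conditioning correctly on $\mathcal F^{(l)}$ gives $\frac1n\mathbb{E}\bigl[\lvert\gamma^{(k)}\rvert\,(M^{\top}M\lvert g^{(l)}\rvert)_{i_k}\bigr]$, which involves $\lVert g^{(l)}\rVert$ and cannot be charged to the earlier factor alone. Cauchy--Schwarz with $\lVert M\lvert\gamma^{(k)}\rvert I_{i_k}\rVert^2\le n\rho(\gamma^{(k)})^2$, $\lVert M\rVert\le n\rho$ and $\mathbb{E}\lVert g^{(l)}\rVert^2=n\,\mathbb{E}[(\gamma^{(l)})^2]$ bounds each pair by $2\rho^{3/2}\sqrt{\mathbb{E}[(\gamma^{(k)})^2]\,\mathbb{E}[(\gamma^{(l)})^2]}$ after dividing by $n$. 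Summing over pairs gives $\rho^{3/2}(\tau-1)R_j$. This is linear in $\tau$, and when $\rho\tau=O(1)$ it exceeds $2\rho^{3/2}\sqrt{\tau}+\rho^2\tau$ by a factor of order $\sqrt{\tau}$. Splitting $2\lvert\gamma^{(k)}\gamma^{(l)}\rvert$ by Young's inequality does not help: the $(\gamma^{(l)})^2$ part only admits the crude bound $(M^{\top}M)_{i_k,i_l}\le n\rho$, and optimizing the weight again yields $2\rho^{3/2}(\tau-1)R_j$.

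The missing idea is to center each update before expanding. Write $d^{(k)}\gamma^{(k)}=\frac1n g^{(k)}+\xi^{(k)}$, where $\mathbb{E}[\xi^{(k)}\mid\mathcal F^{(k)}]=0$ because $g^{(k)}$ is $\mathcal F^{(k)}$-measurable and $\mathbb{E}[d^{(k)}(d^{(k)})^{\top}]=I/n$. This splits the vector $z^{(j)}$ itself into a drift part $\frac{\beta}{n}\sum_k g^{(k)}$ and a martingale part $\beta\sum_k\xi^{(k)}$. Lemma~\ref{lem:triangle-ineq-new} (or Young with weight $\sqrt{\rho\tau}$, as you suspected) then applies to these two vectors:
\begin{itemize}
\item In the martingale part the cross terms vanish exactly, $\mathbb{E}[(\xi^{(k)})^{\top}A_{\rm off}^2\xi^{(r)}]=0$ for $k<r$. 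Your column bound gives $\mathbb{E}[\lVert A_{\rm off}\xi^{(k)}\rVert^2\mid\mathcal F^{(k)}]\le\rho\lVert g^{(k)}\rVert^2$, so this part contributes $\rho R_j$ with no factor of $\tau$.
\item In the drift part, the extra $1/n$, $\lVert A_{\rm off}\rVert\le n\rho$, Cauchy--Schwarz over at most $\tau$ indices, and $\mathbb{E}\lVert g^{(k)}\rVert^2=n\,\mathbb{E}[(\gamma^{(k)})^2]$ give $\rho^2\tau R_j$.
\end{itemize}
This is the paper's argument. The orthogonality of the fluctuations $\xi^{(k)}$ is precisely what reduces the mixed term from order $\tau$ to order $\sqrt{\tau}$. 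Your absolute-value vector $v$ would also work, since $\lvert\gamma^{(k)}\rvert I_{i_k}=d^{(k)}(d^{(k)})^{\top}\lvert g^{(k)}\rvert$ admits the same centering, but some such centering is indispensable.
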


\begin{proof}
    For this proof, let \(\mathcal F^{(k)}\) denote the information determined by \(\mathcal S\), \(d^{(0)}\),~\(\dotsc\), \(d^{(k-1)}\).
    Then \(g^{(k)}\) is determined by \(\mathcal F^{(k)}\), while \(d^{(k)}\) is independent of \(\mathcal F^{(k)}\) and uniformly distributed over the coordinate directions.
    Define, only for this proof,
    \begin{equation} \label{eq:xi-definition}
        \xi^{(k)}:=d^{(k)}\gamma^{(k)}-\frac1n g^{(k)}.
    \end{equation}
    By~\eqref{eq:assembled-gamma},
    \[
        d^{(k)}\gamma^{(k)}=d^{(k)}\bigl(d^{(k)}\bigr)\trans g^{(k)}.
    \]
    Therefore, noticing that \(g^{(k)}\) is determined by \(\mathcal F^{(k)}\), we have
    \begin{align*}
        \expect{d^{(k)}\gamma^{(k)}\mid\mathcal F^{(k)}}
        &=\expect{d^{(k)}\bigl(d^{(k)}\bigr)\trans\mid\mathcal F^{(k)}}g^{(k)}
        =\frac1n g^{(k)}.
    \end{align*}
    Subtracting \(g^{(k)}/n\) from both sides gives
    \begin{equation} \label{eq:xi-md}
        \expect{\xi^{(k)}\mid\mathcal F^{(k)}}=0.
    \end{equation}
    Rearranging~\eqref{eq:xi-definition} yields
    \[
        d^{(k)}\gamma^{(k)}=\frac1n g^{(k)}+\xi^{(k)}.
    \]
    Substituting this equation into~\eqref{eq:z-missing-sum} gives
    \begin{equation}
        z^{(j)}=\beta\left(\frac1n\sum_{k\in\tilde{\Phi}^{-}(j)}g^{(k)}+\sum_{k\in\tilde{\Phi}^{-}(j)}\xi^{(k)}\right).
    \end{equation}
    which further gives, together with Lemma~\ref{lem:triangle-ineq-new},
    \begin{equation} \label{eq:z-drift-noise-new}
    \begin{split}
        &\left(\frac1n\expect{\norm{A_{\rm off}z^{(j)}}^2}\right)^{1/2} \\
        &\quad\leq\beta\left(\frac1n\expect{\norm{\frac1nA_{\rm off}\sum_{k\in\tilde{\Phi}^{-}(j)}g^{(k)}}^2}\right)^{1/2}
        + \beta\left(\frac1n\expect{\norm{A_{\rm off}\sum_{k\in\tilde{\Phi}^{-}(j)}\xi^{(k)}}^2}\right)^{1/2}.
    \end{split}
    \end{equation}
    We next estimate the two terms in~\eqref{eq:z-drift-noise-new} separately.

    We first consider the second terms involved in~\eqref{eq:z-drift-noise-new}.
    Since \(A\) is symmetric positive definite and has unit diagonal, we have, for \(r\neq t\), \(1-A_{r,t}^2>0\) and therefore \(\abs{A_{r,t}}<1\).
    Consequently, \(\abs{(A_{\rm off})_{r,t}}\leq1\) for all \(r,t\).
    For all \(I_t\),
    \begin{equation} \label{eq:Aoff-column-bound-new}
        \norm{A_{\rm off}I_t}^2
        =\sum_{r=1}^n\abs{(A_{\rm off})_{r,t}}^2 
        \leq\sum_{r=1}^n\abs{(A_{\rm off})_{r,t}}
        \leq n\rho.
    \end{equation}
    Conditioned on \(\mathcal F^{(k)}\), the vector \(g^{(k)}\) is fixed and \(d^{(k)}=I_t\) with probability \(1/n\).
    If \(d^{(k)}=I_t\), then~\eqref{eq:assembled-gamma} gives \(\gamma^{(k)}=(g^{(k)})_t\), and therefore
    \[
        \xi^{(k)}=I_t(g^{(k)})_t-\frac1n g^{(k)}.
    \]
    It follows that
    \begin{equation} \label{eq:xi-Aoff-bound-new}
    \begin{split}
        \expect{\norm{A_{\rm off}\xi^{(k)}}^2\mid\mathcal F^{(k)}}
        &=\frac1n\sum_{t=1}^n\left\|(g^{(k)})_tA_{\rm off}I_t-\frac1nA_{\rm off}g^{(k)}\right\|^2 \\
        &=\frac1n\sum_{t=1}^n(g^{(k)})_t^2\norm{A_{\rm off}I_t}^2-\frac1{n^2}\norm{A_{\rm off}g^{(k)}}^2 \\
        &\leq\frac1n\sum_{t=1}^n(g^{(k)})_t^2\norm{A_{\rm off}I_t}^2 \\
        &\leq\rho\norm{g^{(k)}}^2.
    \end{split}
    \end{equation}
    Taking expectation of~\eqref{eq:xi-Aoff-bound-new} and using~\eqref{eq:gamma-second-moment} gives
    \begin{equation} \label{eq:xi-Aoff-bound-uncond-new}
        \frac1n\expect{\norm{A_{\rm off}\xi^{(k)}}^2}\leq\rho\expect{(\gamma^{(k)})^2}.
    \end{equation}

    If \(k<r\), then \(\xi^{(k)}\) is determined by \(\mathcal F^{(r)}\).
    Using the tower property of conditional expectation and~\eqref{eq:xi-md},
    \begin{equation*}
    \begin{split}
        \expect{\bigl(\xi^{(k)}\bigr)\trans A_{\rm off}^2\xi^{(r)}}
        &=\expect{\expect{\bigl(\xi^{(k)}\bigr)\trans A_{\rm off}^2\xi^{(r)}\mid\mathcal F^{(r)}}} \\
        &=\expect{\bigl(\xi^{(k)}\bigr)\trans A_{\rm off}^2\expect{\xi^{(r)}\mid\mathcal F^{(r)}}} \\
        &=0.
    \end{split}
    \end{equation*}
    Together with~\eqref{eq:xi-Aoff-bound-uncond-new}, we have
    \begin{equation} \label{eq:martingale-sum-new}
    \begin{split}
        \frac1n\expect{\norm{A_{\rm off}\sum_{k\in\tilde{\Phi}^{-}(j)}\xi^{(k)}}^2}
        &=\frac1n\sum_{k\in\tilde{\Phi}^{-}(j)}\expect{\norm{A_{\rm off}\xi^{(k)}}^2} \\
        &\leq\rho\sum_{k\in\tilde{\Phi}^{-}(j)}\expect{(\gamma^{(k)})^2}.
    \end{split}
    \end{equation}

    We next estimate the first term in~\eqref{eq:z-drift-noise-new}.
    Since \(A_{\rm off}\) is symmetric,
    \[
        \norm{A_{\rm off}}\leq\sqrt{\norm{A_{\rm off}}_1\norm{A_{\rm off}}_\infty}=\norm{A_{\rm off}}_1\leq n\rho.
    \]
    Also, by the Cauchy--Schwarz inequality and \(\abs{\tilde{\Phi}^{-}(j)}\leq\tau\),
    \begin{align*}
        \norm{\sum_{k\in\tilde{\Phi}^{-}(j)}g^{(k)}}^2
        &\leq\tau\sum_{k\in\tilde{\Phi}^{-}(j)}\norm{g^{(k)}}^2.
    \end{align*}
    This further implies that
    \begin{equation} \label{eq:drift-sum-new}
    \begin{split}
        \frac1n\expect{\norm{\frac1nA_{\rm off}\sum_{k\in\tilde{\Phi}^{-}(j)}g^{(k)}}^2}
        &\leq\frac{\norm{A_{\rm off}}^2}{n^3}\expect{\norm{\sum_{k\in\tilde{\Phi}^{-}(j)}g^{(k)}}^2} \\
        &\leq\frac{n^2\rho^2}{n^3}\tau\sum_{k\in\tilde{\Phi}^{-}(j)}\expect{\norm{g^{(k)}}^2} \\
        &=\rho^2\tau\sum_{k\in\tilde{\Phi}^{-}(j)}\expect{(\gamma^{(k)})^2},
    \end{split}
    \end{equation}
    where the last equality follows from~\eqref{eq:gamma-second-moment}.

    Let
    \[
        R_j:=\sum_{k\in\tilde{\Phi}^{-}(j)}\expect{(\gamma^{(k)})^2}.
    \]
    Using~\eqref{eq:drift-sum-new} and~\eqref{eq:martingale-sum-new} on the right-hand side of~\eqref{eq:z-drift-noise-new}, we obtain
    \begin{equation*}
        \left(\frac1n\expect{\norm{A_{\rm off}z^{(j)}}^2}\right)^{1/2}
        \leq\beta\sqrt{\rho}\bigl(1+\sqrt{\rho\tau}\bigr)\sqrt{\sum_{k\in\tilde{\Phi}^{-}(j)}\expect{(\gamma^{(k)})^2}},
    \end{equation*}
    which proves~\eqref{eq:stale-energy-one-new} by squaring both sides.

    It remains to prove~\eqref{eq:stale-energy-block-new}.
    Summing~\eqref{eq:stale-energy-one-new} over \(i=j+\tau,\dotsc,j+\tau+l_0-1\) gives
    \begin{align*}
        \frac1n\sum_{i=j+\tau}^{j+\tau+l_0-1}\expect{\norm{A_{\rm off}z^{(i)}}^2}
        &\leq\beta^2\rho\bigl(1+\sqrt{\rho\tau}\bigr)^2\sum_{i=j+\tau}^{j+\tau+l_0-1}\sum_{k\in\tilde{\Phi}^{-}(i)}\expect{(\gamma^{(k)})^2}.
    \end{align*}
    By the bounded-delay assumption, \(k\in\tilde{\Phi}^{-}(i)\) implies \(i-\tau\leq k\leq i-1\).
    Since \(i\geq j+\tau\), this implies \(k\geq j\).
    Since \(i\leq j+\tau+l_0-1\), it also implies \(k\leq j+\tau+l_0-2\).
    Moreover, for a fixed \(k\), the relation \(k\in\tilde{\Phi}^{-}(i)\) can hold only for \(i=k+1,\dotsc,k+\tau\), so each \(k\) is counted at most \(\tau\) times.
    Therefore,
    \begin{equation} \label{eq:missing-count-block-new}
        \sum_{i=j+\tau}^{j+\tau+l_0-1}\sum_{k\in\tilde{\Phi}^{-}(i)}\expect{(\gamma^{(k)})^2}\leq\tau\sum_{k=j}^{j+\tau+l_0-1}\expect{(\gamma^{(k)})^2}.
    \end{equation}
    Combining the last two inequalities gives~\eqref{eq:stale-energy-block-new}.
\end{proof}

\paragraph{Step 2: Constructing a decreasing modified error.}
We first derive a one-step estimate that separates the progress of the current update from the effect of the missing updates.

\begin{lemma} \label{lem:one-step-new}
    For \(\rho>0\), let
    \[
        \eta_\tau:=\beta\omega_\tau, \qquad \delta_\tau:=\frac{\beta^2\omega_\tau}{\tau}.
    \]
    Then
    \begin{equation} \label{eq:one-step-new}
        \expectabbr{j+1}\leq\expectabbr{j}-\beta\bigl(2-\beta-\eta_\tau\bigr)\expect{(\gamma^{(j)})^2}+\delta_\tau\sum_{k\in\tilde{\Phi}^{-}(j)}\expect{(\gamma^{(k)})^2}.
    \end{equation}
    If \(\rho=0\), the same bound holds with \(\eta_\tau=\delta_\tau=0\).
\end{lemma}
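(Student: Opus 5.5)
We expand the one-step change of the energy error directly, working under the normalization \(A_{k,k}=1\). Since \(x^{(j+1)}=x^{(j)}+\beta d^{(j)}\gamma^{(j)}\), we have \(e^{(j+1)}=e^{(j)}-\beta d^{(j)}\gamma^{(j)}\), and hence
\[
    \normA{e^{(j+1)}}^2=\normA{e^{(j)}}^2-2\beta\gamma^{(j)}\bigl(d^{(j)}\bigr)\trans Ae^{(j)}+\beta^2(\gamma^{(j)})^2,
\]
using \((d^{(j)})\trans Ad^{(j)}=1\). By~\eqref{eq:assembled-gamma}, \((d^{(j)})\trans Ae^{(j)}=\gamma^{(j)}-(d^{(j)})\trans A_{\rm off}z^{(j)}\). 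Substituting this gives
\[
    \normA{e^{(j+1)}}^2=\normA{e^{(j)}}^2-\beta(2-\beta)(\gamma^{(j)})^2+2\beta\gamma^{(j)}\bigl(d^{(j)}\bigr)\trans A_{\rm off}z^{(j)}.
\]
Taking expectations, the only term left to control is the cross term \(2\beta\,\mathbb E[\gamma^{(j)}(d^{(j)})\trans A_{\rm off}z^{(j)}]\).

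For the cross term we use Young's inequality with a weight \(\epsilon>0\):
\[
    2\gamma^{(j)}\bigl(d^{(j)}\bigr)\trans A_{\rm off}z^{(j)}\le\epsilon(\gamma^{(j)})^2+\epsilon^{-1}\bigl((d^{(j)})\trans A_{\rm off}z^{(j)}\bigr)^2.
\]
Since \(z^{(j)}\) is determined by \(\mathcal F^{(j)}\) and \(d^{(j)}\) is independent and uniform, conditioning on \(\mathcal F^{(j)}\) gives \(\mathbb E[((d^{(j)})\trans A_{\rm off}z^{(j)})^2]=\frac1n\mathbb E\norm{A_{\rm off}z^{(j)}}^2\). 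This is the same computation as in~\eqref{eq:gamma-second-moment}. We then bound the last quantity by the first part of Lemma~\ref{lem:stale-energy-new}, namely~\eqref{eq:stale-energy-one-new}, by \(\beta^2\rho(1+\sqrt{\rho\tau})^2\sum_{k\in\tilde\Phi^-(j)}\mathbb E(\gamma^{(k)})^2\). Altogether,
\[
    \expectabbr{j+1}\le\expectabbr{j}-\beta(2-\beta-\epsilon)\mathbb E(\gamma^{(j)})^2+\frac{\beta^3\rho(1+\sqrt{\rho\tau})^2}{\epsilon}\sum_{k\in\tilde\Phi^-(j)}\mathbb E(\gamma^{(k)})^2 .
\]

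It remains to choose \(\epsilon\), and this is the one delicate point: the choice must make both constants come out exactly as \(\eta_\tau\) and \(\delta_\tau\). We take \(\epsilon=\eta_\tau=\beta\omega_\tau\). The coefficient of the stale sum is then
\[
    \frac{\beta^2\rho(1+\sqrt{\rho\tau})^2}{\omega_\tau}.
\]
Since \(\omega_\tau=\sqrt{\rho\tau}(1+\sqrt{\rho\tau})\), we have \(\omega_\tau^2=\rho\tau(1+\sqrt{\rho\tau})^2\). Hence \(\rho(1+\sqrt{\rho\tau})^2=\omega_\tau^2/\tau\), and the coefficient equals \(\beta^2\omega_\tau/\tau=\delta_\tau\), as claimed. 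This step needs \(\omega_\tau>0\), i.e. \(\rho>0\), which is exactly the hypothesis of the main case.

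If \(\rho=0\), then \(A_{\rm off}=0\), because all its entries are bounded in absolute value by the column sums defining \(\rho\). The cross term therefore vanishes identically. We then get the bound with \(\eta_\tau=\delta_\tau=0\) directly, with no Young step needed.
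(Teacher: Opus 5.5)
Your proof is correct and follows the paper's argument: the same energy identity, Young's inequality on the cross term \(2\beta\gamma^{(j)}\bigl(d^{(j)}\bigr)\trans A_{\rm off}z^{(j)}\), the bound \eqref{eq:stale-energy-one-new}, and the weight \(\eta_\tau=\beta\omega_\tau\). The only differences are small. You apply Young pointwise and then average over \(d^{(j)}\), whereas the paper first averages to \(\frac1n\mathbb{E}\bigl[(g^{(j)})\trans A_{\rm off}z^{(j)}\bigr]\) and then applies Young. Your explicit check that \(\rho(1+\sqrt{\rho\tau})^2=\omega_\tau^2/\tau\) and your separate treatment of \(\rho=0\) (where \(A_{\rm off}=0\)) fill in steps the paper leaves implicit.
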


\begin{proof}
    From the iteration formula,
    \[
        x^{(j+1)}-x^\star=x^{(j)}-x^\star+\beta d^{(j)}\gamma^{(j)}.
    \]
    Expanding the squared \(A\)-norm gives
    \begin{equation} \label{eq:lem-proof:normxj1-xs}
    \begin{split}
        &\normA{x^{(j+1)}-x^\star}^2 \\
        &\quad=\normA{x^{(j)}-x^\star}^2+2\beta\gamma^{(j)}\bigl(d^{(j)}\bigr)\trans A\bigl(x^{(j)}-x^\star\bigr)
        +\beta^2(\gamma^{(j)})^2\bigl(d^{(j)}\bigr)\trans A d^{(j)} \\
        &\quad=\normA{x^{(j)}-x^\star}^2+2\beta\gamma^{(j)}\bigl(d^{(j)}\bigr)\trans A\bigl(x^{(j)}-x^\star\bigr)
        +\beta^2(\gamma^{(j)})^2,
    \end{split}
    \end{equation}
    where the last equality holds since \(A\) has unit diagonal and \(d^{(j)}\) is a coordinate vector.
    By \(e^{(j)}=x^\star-x^{(j)}\),~\eqref{eq:assembled-gamma} implies
    \begin{equation} \label{eq:lem-proof:dAxjxs}
    \begin{split}
        \bigl(d^{(j)}\bigr)\trans A\bigl(x^{(j)}-x^\star\bigr)
        &=-\bigl(d^{(j)}\bigr)\trans Ae^{(j)}
        =-\gamma^{(j)}+\bigl(d^{(j)}\bigr)\trans A_{\rm off}z^{(j)}.
    \end{split}
    \end{equation}
    Substituting~\eqref{eq:lem-proof:dAxjxs} for \(\bigl(d^{(j)}\bigr)\trans A\bigl(x^{(j)}-x^\star\bigr)\) involved in~\eqref{eq:lem-proof:normxj1-xs}, we have
    \begin{equation} \label{eq:energy-identity-new}
        \normA{x^{(j+1)}-x^\star}^2=\normA{x^{(j)}-x^\star}^2-\beta(2-\beta)(\gamma^{(j)})^2+2\beta\gamma^{(j)}\bigl(d^{(j)}\bigr)\trans A_{\rm off}z^{(j)}.
    \end{equation}

    Next, we evaluate the expectation of the last term in~\eqref{eq:energy-identity-new}.
    Using~\eqref{eq:assembled-gamma}, we obtain
    \[
        \gamma^{(j)}\bigl(d^{(j)}\bigr)\trans A_{\rm off}z^{(j)}
        =\bigl(g^{(j)}\bigr)\trans d^{(j)}\bigl(d^{(j)}\bigr)\trans A_{\rm off}z^{(j)}.
    \]
    Noticing that \(g^{(j)}\) and \(z^{(j)}\) do not depend on \(d^{(j)}\), we can take the expectation of \(\gamma^{(j)}\bigl(d^{(j)}\bigr)\trans A_{\rm off}z^{(j)}\) to obtain
    \begin{equation} \label{eq:cross-conditional-new}
    \begin{split}
        \expect{\gamma^{(j)}\bigl(d^{(j)}\bigr)\trans A_{\rm off}z^{(j)}}
        &=\expect{\bigl(g^{(j)}\bigr)\trans\expect{d^{(j)}\bigl(d^{(j)}\bigr)\trans}A_{\rm off}z^{(j)}}
        =\frac1n\expect{\bigl(g^{(j)}\bigr)\trans A_{\rm off}z^{(j)}}.
    \end{split}
    \end{equation}
    Then we will handle \(\expect{\bigl(g^{(j)}\bigr)\trans A_{\rm off}z^{(j)}}/n\).
    For any \(\eta>0\), Young's inequality gives
    \[
        2\bigl(g^{(j)}\bigr)\trans A_{\rm off}z^{(j)}\leq\eta\norm{g^{(j)}}^2+\frac1{\eta}\norm{A_{\rm off}z^{(j)}}^2.
    \]
    Dividing by \(n\), taking expectations, and using~\eqref{eq:gamma-second-moment} yields
    \begin{equation} \label{eq:cross-young-new}
        \frac{2}{n}\expect{\bigl(g^{(j)}\bigr)\trans A_{\rm off}z^{(j)}}\leq\eta\expect{(\gamma^{(j)})^2}+\frac{1}{\eta n}\expect{\norm{A_{\rm off}z^{(j)}}^2}.
    \end{equation}
    
    By taking expectations in~\eqref{eq:energy-identity-new} and using~\eqref{eq:cross-conditional-new},~\eqref{eq:cross-young-new}, we have
    \begin{align*}
        \expectabbr{j+1}
        &\leq\expectabbr{j}-\beta(2-\beta-\eta)\expect{(\gamma^{(j)})^2}+\frac{\beta}{\eta n}\expect{\norm{A_{\rm off}z^{(j)}}^2},
    \end{align*}
    which further implies that, using~\eqref{eq:stale-energy-one-new},
    \begin{equation} \label{eq:one-step-eta-new}
        \expectabbr{j+1}\leq\expectabbr{j}-\beta(2-\beta-\eta)\expect{(\gamma^{(j)})^2}+\frac{\beta^3\rho(1+\sqrt{\rho\tau})^2}{\eta}\sum_{k\in\tilde{\Phi}^{-}(j)}\expect{(\gamma^{(k)})^2}.
    \end{equation}
    By choosing \(\eta=\eta_\tau\) in~\eqref{eq:one-step-eta-new}, we conclude the proof.
\end{proof}

\paragraph{Step 3: Establishing progress over a block of iterations.}
To prove Theorem~\ref{thm:improved}, we realize that one of the main difficulties arises from the fact that the expected error \(\expectabbr{j}\) is not necessarily monotone under asynchronous updates.
This lack of monotonicity makes it difficult to directly track the progress of the algorithm over multiple iterations and leads to rather conservative estimates when errors at different iterations need to be compared.
To address this issue, we augment \(\expectabbr{j}\) with an additional term that captures the effect of pending asynchronous updates, thereby constructing \(\Etemp{j}\).
As shown below, this additional term is designed so that \(\Etemp{j}\) is monotonically decreasing, which provides a more suitable quantity for the refined convergence analysis.
More precisely, we define
\begin{equation} \label{eq:def-Etemp-new}
    \Etemp{j}:=\expectabbr{j}+\delta_\tau\sum_{k=0}^{j-1}m^{(j)}(k)\expect{(\gamma^{(k)})^2}.
\end{equation}
where \(m^{(j)}(k):=\abs{\{i\geq j: k\in \tilde{\Phi}^{-}(i)\}}\) satisfies
\begin{equation} \label{eq:mjk-properties-new}
    0\leq m^{(j)}(k)\leq\tau, \qquad m^{(j+\tau)}(k)=0\quad\text{with}\quad k<j.
\end{equation}
In Property~\ref{property:Etemp-new}, we present the basic properties of \(\Etemp{j}\).

\begin{property} \label{property:Etemp-new}
    Under the assumptions of Theorem~\ref{thm:improved}, then
    \begin{equation} \label{eq:property-Etemp-ge-E-new}
        \expectabbr{j}\leq\Etemp{j},
    \end{equation}
    \begin{equation} \label{eq:property-monodecrease-new}
        \Etemp{j+1}\leq\Etemp{j}-a_\tau \expect{(\gamma^{(j)})^2}\leq\Etemp{j},
    \end{equation}
    and, for every integer \(l\geq1\),
    \begin{equation} \label{eq:property-monodecrease-l-new}
        \Etemp{j+l}\leq\Etemp{j}-a_\tau\sum_{k=j}^{j+l-1}\expect{(\gamma^{(k)})^2}.
    \end{equation}
    Moreover,
    \begin{equation} \label{eq:Etemp-flush-new}
        \Etemp{j+\tau}\leq\expectabbr{j+\tau}+\beta^2\omega_\tau\sum_{k=j}^{j+\tau-1}\expect{(\gamma^{(k)})^2}.
    \end{equation}
\end{property}

\begin{proof}
    From the definition~\eqref{eq:def-Etemp-new} of \(\Etemp{j}\), it is straightforward to see that \(\expectabbr{j} \leq\Etemp{j}\).

    We will then prove~\eqref{eq:property-monodecrease-l-new}.
    For every \(k<j\), the set of future reads counted by \(m^{(j+1)}(k)\) is the set counted by \(m^{(j)}(k)\) with iteration \(j\) removed if update \(k\) is missing from that read.
    Therefore,
    \begin{equation} \label{eq:m-difference-new}
        m^{(j+1)}(k)-m^{(j)}(k)=-\mathbf1_{\{k\in \tilde{\Phi}^{-}(j)\}}
    \end{equation}
    Using~\eqref{eq:m-difference-new}, we have
    \begin{equation} \label{eq:debt-difference-new}
    \begin{split}
        &\delta_\tau\sum_{k=0}^{j}m^{(j+1)}(k)\expect{(\gamma^{(k)})^2}-\delta_\tau\sum_{k=0}^{j-1}m^{(j)}(k)\expect{(\gamma^{(k)})^2} \\
        &=\delta_\tau\sum_{k=0}^{j-1}\bigl(m^{(j+1)}(k)-m^{(j)}(k)\bigr)\expect{(\gamma^{(k)})^2}+\delta_\tau m^{(j+1)}(j)\expect{(\gamma^{(j)})^2} \\
        &=-\delta_\tau\sum_{k\in \tilde{\Phi}^{-}(j)}\expect{(\gamma^{(k)})^2}+\delta_\tau m^{(j+1)}(j)\expect{(\gamma^{(j)})^2}.
    \end{split}
    \end{equation}

    Combining~\eqref{eq:one-step-new} with~\eqref{eq:debt-difference-new} and~\eqref{eq:def-Etemp-new} and noticing that \(m^{(j+1)}(j)\leq\tau\), we obtain
    \begin{align*}
        \Etemp{j+1}-\Etemp{j}
        &\leq-\beta(2-\beta-\eta_\tau)\expect{(\gamma^{(j)})^2}+\delta_\tau m^{(j+1)}(j)\expect{(\gamma^{(j)})^2}\\
        &\leq-\left[\beta(2-\beta-\eta_\tau)-\delta_\tau\tau\right]\expect{(\gamma^{(j)})^2},
    \end{align*}
    which proves~\eqref{eq:property-monodecrease-new} by the definitions of \(\eta_\tau\) and \(\delta_\tau\).
    By summing, we can further prove~\eqref{eq:property-monodecrease-l-new}.

    It remains to bound \(\Etemp{j+\tau}\).
    By~\eqref{eq:def-Etemp-new} and~\eqref{eq:mjk-properties-new}, \(\Etemp{j+\tau}\) can be bounded by
    \begin{align*}
        \Etemp{j+\tau}
        &=\expectabbr{j+\tau}+\delta_\tau\sum_{k=j}^{j+\tau-1}m^{(j+\tau)}(k)\expect{(\gamma^{(k)})^2}\\
        &\leq\expectabbr{j+\tau}+\delta_\tau\tau\sum_{k=j}^{j+\tau-1}\expect{(\gamma^{(k)})^2},
    \end{align*}
    which proves~\eqref{eq:Etemp-flush-new} by using the definition of \(\delta_\tau\).
\end{proof}

Using~\eqref{eq:property-monodecrease-l-new}, to derive a block contraction for \(\Etemp{j}\), it suffices to bound the second term, i.e., \(\sum_{k=j}^{j+l-1}\expect{(\gamma^{(k)})^2}\), which is addressed in Lemma~\ref{lem:block-progress-new}.

\begin{lemma} \label{lem:block-progress-new}
    Let \(l_0\geq1\), \(l=\tau+l_0\), and \(c_{l_0,\tau}\) be defined by~\eqref{eq:def-cl0tau-new}.
    Then, for every \(j\geq0\),
    \begin{equation} \label{eq:block-progress-new}
        \sum_{k=j}^{j+l-1}\expect{(\gamma^{(k)})^2}\geq c_{l_0,\tau}\expectabbr{j}.
    \end{equation}
\end{lemma}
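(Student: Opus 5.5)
The plan is to lower-bound the sum over the last \(l_0\) iterations of the block, \(i=j+\tau,\dotsc,j+\tau+l_0-1\), compare \(e^{(i)}\) with \(e^{(j)}\), and absorb every error term into \(S:=\sum_{k=j}^{j+l-1}\expect{(\gamma^{(k)})^2}\). All the square-root manipulations below are triangle inequalities for the norm \(\bigl(\sum_i\expect{\norm{\cdot}^2}\bigr)^{1/2}\) on tuples of random vectors, in the spirit of Lemma~\ref{lem:triangle-ineq-new}.

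\emph{Step 1: pass from \(\gamma^{(i)}\) to \(Ae^{(i)}\).} By~\eqref{eq:gamma-second-moment} and~\eqref{eq:assembled-gamma}, \(\expect{(\gamma^{(i)})^2}=\frac1n\expect{\norm{Ae^{(i)}+A_{\rm off}z^{(i)}}^2}\). Hence
\[
\sqrt{S}\geq\Bigl(\sum_{i=j+\tau}^{j+\tau+l_0-1}\tfrac1n\expect{\norm{Ae^{(i)}}^2}\Bigr)^{1/2}-\Bigl(\sum_{i=j+\tau}^{j+\tau+l_0-1}\tfrac1n\expect{\norm{A_{\rm off}z^{(i)}}^2}\Bigr)^{1/2}.
\]
By~\eqref{eq:stale-energy-block-new}, the last term is at most \(\beta\omega_\tau\sqrt S\). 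For the first term I use \(\norm{Ae}^2\geq\lambda_{\min}(A)\normA{e}^2\), so that \(\frac1n\norm{Ae^{(i)}}^2\geq\mu\normA{e^{(i)}}^2\).

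\emph{Step 2: compare \(e^{(i)}\) with \(e^{(j)}\).} We have \(e^{(i)}=e^{(j)}-\beta\sum_{k=j}^{i-1}d^{(k)}\gamma^{(k)}\). The triangle inequality in the \(A\)-norm, summed over the \(l_0\) indices \(i\), gives
\[
\Bigl(\sum_i\expect{\normA{e^{(i)}}^2}\Bigr)^{1/2}\geq\sqrt{l_0\expectabbr{j}}-\beta\Bigl(\sum_i\expect{\normA{\textstyle\sum_{k=j}^{i-1}d^{(k)}\gamma^{(k)}}^2}\Bigr)^{1/2}.
\]
The sum \(\sum_{k=j}^{i-1}\) has \(i-j\leq\tau+l_0-1\) terms, and \(\normA{d^{(k)}\gamma^{(k)}}^2=(\gamma^{(k)})^2\) because \(A\) has unit diagonal. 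Cauchy--Schwarz in the \(A\)-inner product then gives \(\normA{\sum_k d^{(k)}\gamma^{(k)}}^2\leq(\tau+l_0-1)\sum_{k=j}^{i-1}(\gamma^{(k)})^2\). Taking expectations and summing over the \(l_0\) indices \(i\) bounds the subtracted term by \(\beta\sqrt{l_0(\tau+l_0-1)S}\).

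\emph{Step 3: combine and rearrange.} Putting Steps~1 and~2 together,
\[
\sqrt S\geq\sqrt{\mu l_0\expectabbr{j}}-\beta\sqrt{\mu l_0(\tau+l_0-1)}\sqrt S-\beta\omega_\tau\sqrt S.
\]
Rearranging, \(\sqrt S\,\bigl[1+\beta\sqrt{\mu l_0(\tau+l_0-1)}+\beta\omega_\tau\bigr]\geq\sqrt{\mu l_0\expectabbr{j}}\). Squaring gives~\eqref{eq:block-progress-new} with exactly the constant \(c_{l_0,\tau}\). If the right-hand side before squaring is negative, the inequality \(\sqrt S\geq0\) already makes the rearranged bound hold, so that case is harmless.

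\emph{Main obstacle.} The delicate point is Step~2. To get the factor \(\sqrt{\mu}\) inside the square root of \(c_{l_0,\tau}\), the drift \(e^{(i)}-e^{(j)}\) must be measured in the \(A\)-norm, after the passage to \(\mu\normA{\cdot}^2\), not in the Euclidean norm. The Euclidean route would need \(\norm{A}\leq n\) and would lose the \(\mu\). A second point to check is that every \(\gamma^{(k)}\) appearing in Steps~1 and~2 has index in \(\{j,\dotsc,j+l-1\}\), so that all error terms are controlled by the same \(S\). For Step~1 this is exactly the index range guaranteed by Lemma~\ref{lem:stale-energy-new}.
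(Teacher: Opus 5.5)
Your proposal is correct and follows the paper's proof essentially step for step. It uses the same restriction to the last \(l_0\) indices, the reverse triangle inequality together with~\eqref{eq:stale-energy-block-new}, the bound \(\frac1n\norm{Ae^{(i)}}^2\geq\mu\normA{e^{(i)}}^2\), and the Cauchy--Schwarz drift bound with factor \(l-1=\tau+l_0-1\). The only difference is that you apply the triangle inequality to the whole \(l_0\)-tuple \((e^{(i)})_i\), whereas the paper treats each \(\sqrt{\expectabbr{i}}\) separately and then squares and sums; because you never square a possibly negative lower bound, you avoid the paper's two-case split and its use of \(c_{l_0,\tau}\leq 1/(\beta^2(l-1))\).
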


\begin{proof}
    For every \(i\in\{j+\tau,\dotsc,j+\tau+l_0-1\}\),~\eqref{eq:gamma-second-moment} and~\eqref{eq:assembled-gamma} give
    \begin{equation} \label{eq:block-gamma-new}
        \expect{(\gamma^{(i)})^2}=\frac1n\expect{\norm{Ae^{(i)}+A_{\rm off}z^{(i)}}^2}.
    \end{equation}
    By summing on \(i\), we have
    \begin{equation} \label{eq:block-gamma-sum}
        \sum_{i=j+\tau}^{j+l-1}\expect{(\gamma^{(i)})^2}=\frac1n\sum_{i=j+\tau}^{j+l-1}\expect{\norm{Ae^{(i)}+A_{\rm off}z^{(i)}}^2}.
    \end{equation}
    Similarly to Lemma~\ref{lem:triangle-ineq-new}, we can prove the triangle inequality for \(\sum_{i=j+\tau}^{j+l-1}\expect{(\gamma^{(i)})^2}\), which implies
    \begin{equation*}
        \left(\sum_{i=j+\tau}^{j+l-1}\expect{(\gamma^{(i)})^2}\right)^{1/2}\geq\left(\frac1n\sum_{i=j+\tau}^{j+l-1}\expect{\norm{Ae^{(i)}}^2}\right)^{1/2}-\left(\frac1n\sum_{i=j+\tau}^{j+l-1}\expect{\norm{A_{\rm off}z^{(i)}}^2}\right)^{1/2}.
    \end{equation*}
    Furthermore, noticing \(j+\tau\geq j\), it follows that
    \begin{equation} \label{eq:block-minkowski-new}
        \left(\sum_{i=j}^{j+l-1}\expect{(\gamma^{(i)})^2}\right)^{1/2}\geq\left(\frac1n\sum_{i=j+\tau}^{j+l-1}\expect{\norm{Ae^{(i)}}^2}\right)^{1/2}-\left(\frac1n\sum_{i=j+\tau}^{j+l-1}\expect{\norm{A_{\rm off}z^{(i)}}^2}\right)^{1/2}.
    \end{equation}
    Next, we bound the two terms on the right-hand side of~\eqref{eq:block-minkowski-new} separately.

    We first bound the first term on the right-hand side of~\eqref{eq:block-minkowski-new}.
    Notice that
    \[
        \norm{Ae^{(i)}}^2=\bigl(e^{(i)}\bigr)\trans A^2e^{(i)}\geq\lambda_{\min}(A)\bigl(e^{(i)}\bigr)\trans Ae^{(i)}.
    \]
    By dividing by \(n\) and recalling \(\mu=\lambda_{\min}(A)/n\), we obtain
    \begin{equation} \label{eq:Ae-lower-new}
        \frac1n\norm{Ae^{(i)}}^2\geq\mu\normA{e^{(i)}}^2.
    \end{equation}
    We then move on to handling \(\normA{e^{(i)}}^2\).
    Applying Lemma~\ref{lem:triangle-ineq-new} to
    \[
        -e^{(i)} = x^{(i)}-x^\star=(x^{(j)}-x^\star)+(x^{(i)}-x^{(j)}),
    \]
    we obtain
    \begin{equation} \label{eq:lem-proof:Eisqrt}
    \begin{split}
        \sqrt{\expectabbr{i}}
        &\geq\sqrt{\expectabbr{j}}-\sqrt{\expect{\normA{x^{(i)}-x^{(j)}}^2}}.
    \end{split}
    \end{equation}
    We now consider \(\expect{\normA{x^{(i)}-x^{(j)}}^2}\).
    Since
    \[
        x^{(i)}-x^{(j)}=\beta\sum_{k=j}^{i-1}d^{(k)}\gamma^{(k)},
    \]
    the Cauchy--Schwarz inequality for the \(A\)-norm gives
    \begin{equation} \label{eq:lem-proof:xi-xj}
        \normA{x^{(i)}-x^{(j)}}^2
        \leq\beta^2(i-j)\sum_{k=j}^{i-1}\normA{d^{(k)}\gamma^{(k)}}^2.
    \end{equation}
    Notice that, by \(A\) having unit diagonal,
    \[
        \normA{d^{(k)}\gamma^{(k)}}^2=(\gamma^{(k)})^2\bigl(d^{(k)}\bigr)\trans A d^{(k)}=(\gamma^{(k)})^2.
    \]
    By taking expectations of~\eqref{eq:lem-proof:xi-xj} and \(i-j\leq l-1\), we have
    \begin{equation} \label{eq:lem-proof:Exi-xj}
        \expect{\normA{x^{(i)}-x^{(j)}}^2}
        \leq\beta^2(l-1)\sum_{k=j}^{i-1}\expect{(\gamma^{(k)})^2}
        \leq\beta^2(l-1)\sum_{k=j}^{j+l-1}\expect{(\gamma^{(k)})^2}.
    \end{equation}
    Together with~\eqref{eq:lem-proof:Eisqrt}, we further obtain
    \begin{equation} \label{eq:Ei-movement-new}
    \begin{split}
        \sqrt{\expectabbr{i}}
        &\geq\sqrt{\expectabbr{j}}-\beta\sqrt{(l-1)\sum_{k=j}^{j+l-1}\expect{(\gamma^{(k)})^2}}.
    \end{split}
    \end{equation}

    Then we will bound the second term involved in~\eqref{eq:Ei-movement-new} by distinguishing two cases.
    Suppose first that
    \[
        \beta\sqrt{(l-1)\sum_{k=j}^{j+l-1}\expect{(\gamma^{(k)})^2}}\geq\sqrt{\expectabbr{j}}.
    \]
    Squaring and dividing by \(\beta^2T\) gives
    \[
        \sum_{k=j}^{j+l-1}\expect{(\gamma^{(k)})^2}\geq\frac{\expectabbr{j}}{\beta^2(l-1)}
        \geq c_{l_0,\tau}\expectabbr{j},
    \]
    where the last inequality is due to \(c_{l_0,\tau}\leq 1/(\beta^2(l-1))\) by the definition of \(c_{l_0,\tau}\).
    This proves~\eqref{eq:block-progress-new} in this case.
    We now consider the case
    \[
        \beta\sqrt{(l-1)\sum_{k=j}^{j+l-1}\expect{(\gamma^{(k)})^2}}<\sqrt{\expectabbr{j}}.
    \]
    Then the right-hand side of~\eqref{eq:Ei-movement-new} is positive.
    For every \(i\in\{j+\tau,\dotsc,j+l-1\}\),~\eqref{eq:Ae-lower-new} and~\eqref{eq:Ei-movement-new} imply
    \begin{align*}
        \frac1n\expect{\norm{Ae^{(i)}}^2}
        &\geq\mu\expectabbr{i}
        \geq\mu\left(\sqrt{\expectabbr{j}}-\beta\sqrt{(l-1)\sum_{k=j}^{j+l-1}\expect{(\gamma^{(k)})^2}}\right)^2.
    \end{align*}
    Summing this inequality over the \(l_0\) indices in \(\{j+\tau,\dotsc,j+l-1\}\) gives
    \[
        \frac1n\sum_{k=j+\tau}^{j+l-1}\expect{\norm{Ae^{(i)}}^2}\geq\mu l_0\left(\sqrt{\expectabbr{j}}-\beta\sqrt{(l-1)\sum_{k=j}^{j+l-1}\expect{(\gamma^{(k)})^2}}\right)^2,
    \]
    which further yields, by taking square roots of this inequality,
    \begin{equation} \label{eq:block-current-lower-new}
        \left(\frac1n\sum_{k=j+\tau}^{j+l-1}\expect{\norm{Ae^{(i)}}^2}\right)^{1/2}\geq\sqrt{\mu l_0}\left(\sqrt{\expectabbr{j}}-\beta\sqrt{(l-1)\sum_{k=j}^{j+l-1}\expect{(\gamma^{(k)})^2}}\right).
    \end{equation}

    It remains to bound the second term on the right-hand side of~\eqref{eq:block-minkowski-new}.
    By~\eqref{eq:stale-energy-block-new} in Lemma~\ref{lem:stale-energy-new},
    \begin{equation} \label{eq:block-stale-upper-new}
        \left(\frac1n\sum_{k=j+\tau}^{j+l-1}\expect{\norm{A_{\rm off}z^{(i)}}^2}\right)^{1/2}\leq\beta\omega_\tau\sqrt{\sum_{k=j}^{j+l-1}\expect{(\gamma^{(k)})^2}}.
    \end{equation}
    Only for this proof, let
    \[
        S^{(j)}:=\sum_{k=j}^{j+l-1}\expect{(\gamma^{(k)})^2}.
    \]
    Substituting~\eqref{eq:block-current-lower-new} and~\eqref{eq:block-stale-upper-new} into~\eqref{eq:block-minkowski-new} gives
    \begin{align*}
        \sqrt{S^{(j)}}
        &\geq\sqrt{\mu l_0}\left(\sqrt{\expectabbr{j}}-\beta\sqrt{(l-1)S^{(j)}}\right)-\beta\omega_\tau\sqrt{S^{(j)}},
    \end{align*}
    which implies that
    \begin{align*}
        S^{(j)}
        &\geq\frac{\mu l_0}{\bigl(1+\beta\sqrt{\mu l_0T}+\beta\omega_\tau\bigr)^2}\expectabbr{j}
        =c_{l_0,\tau}\expectabbr{j}.
    \end{align*}
    This proves~\eqref{eq:block-progress-new} in the second case and completes the proof.
\end{proof}

\paragraph{Step 4: Deriving a block contraction.}
We now combine the monotonicity of \(\Etemp{j}\) with Lemma~\ref{lem:block-progress-new}.

\begin{lemma} \label{lem:cases-new}
    Let \(l_0\geq1\), \(l=\tau+l_0\), \(0<\theta<1\), and \(\varepsilon>0\).
    \begin{enumerate}
        \item If \(\expectabbr{j}\geq\theta\Etemp{j}\), then
        \begin{equation} \label{eq:case1-new}
            \Etemp{j+l}\leq\bigl(1-a_\tau c_{l_0,\tau}\theta\bigr)\Etemp{j}.
        \end{equation}
        \item If \(\expectabbr{j}<\theta\Etemp{j}\), then
        \begin{equation} \label{eq:case2-new}
            \Etemp{j+l}\leq\max\left\{1-a_\tau\varepsilon,\ \theta+\beta^2(\tau+\omega_\tau)\varepsilon+2\beta\sqrt{\theta\tau\varepsilon}\right\}\Etemp{j}.
        \end{equation}
    \end{enumerate}
\end{lemma}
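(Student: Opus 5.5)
The proof uses the monotonicity of \(\Etemp{j}\) from Property~\ref{property:Etemp-new}, together with the block-progress bound of Lemma~\ref{lem:block-progress-new}. Throughout, write
\[
S^{(j)}:=\sum_{k=j}^{j+l-1}\expect{(\gamma^{(k)})^2}.
\]
Applying~\eqref{eq:property-monodecrease-l-new} with this \(l\) gives the basic inequality \(\Etemp{j+l}\leq\Etemp{j}-a_\tau S^{(j)}\). Here \(a_\tau>0\) by~\eqref{eq:beta-condition-new}. Both cases reduce to bounding \(S^{(j)}\) from below, or else exploiting that it is small.

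\emph{Case 1.} Lemma~\ref{lem:block-progress-new} gives \(S^{(j)}\geq c_{l_0,\tau}\expectabbr{j}\). The hypothesis \(\expectabbr{j}\geq\theta\Etemp{j}\) then yields \(S^{(j)}\geq c_{l_0,\tau}\theta\Etemp{j}\). Substituting this into the basic inequality gives~\eqref{eq:case1-new} directly.

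\emph{Case 2.} Here I split once more, according to whether \(S^{(j)}\geq\varepsilon\Etemp{j}\).

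If \(S^{(j)}\geq\varepsilon\Etemp{j}\), the basic inequality immediately gives \(\Etemp{j+l}\leq(1-a_\tau\varepsilon)\Etemp{j}\).

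If instead \(S^{(j)}<\varepsilon\Etemp{j}\), little progress is being made, but the "debt" part of \(\Etemp{}\) must then be small, and this is what we exploit. The steps are as follows.
\begin{enumerate}
\item Since \(l\geq\tau\), Property~\ref{property:Etemp-new} gives \(\Etemp{j+l}\leq\Etemp{j+\tau}\).
\item By~\eqref{eq:Etemp-flush-new}, \(\Etemp{j+\tau}\leq\expectabbr{j+\tau}+\beta^2\omega_\tau S^{(j)}\), since the sum there runs over a subset of the indices of \(S^{(j)}\).
\item To control \(\expectabbr{j+\tau}\), I reuse the displacement argument from the proof of Lemma~\ref{lem:block-progress-new}. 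Apply Lemma~\ref{lem:triangle-ineq-new} (upper inequality) to \(x^{(j+\tau)}-x^\star=(x^{(j)}-x^\star)+(x^{(j+\tau)}-x^{(j)})\). Then use Cauchy--Schwarz as in~\eqref{eq:lem-proof:xi-xj}, with \(i-j=\tau\) and unit diagonal. This gives
\[
\sqrt{\expectabbr{j+\tau}}\leq\sqrt{\expectabbr{j}}+\beta\sqrt{\tau S^{(j)}}.
\]
\item Squaring, and using \(\expectabbr{j}<\theta\Etemp{j}\) and \(S^{(j)}<\varepsilon\Etemp{j}\), gives
\[
\expectabbr{j+\tau}\leq\bigl(\theta+2\beta\sqrt{\theta\tau\varepsilon}+\beta^2\tau\varepsilon\bigr)\Etemp{j}.
\]
\item Adding the term \(\beta^2\omega_\tau\varepsilon\Etemp{j}\) from step 2 yields the second entry of the maximum in~\eqref{eq:case2-new}.
\end{enumerate}
Taking the maximum over the two subcases completes Case 2.

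The main subtlety is this last subcase. The natural decrease inequality is useless when \(S^{(j)}\) is tiny, so one must instead bound \(\Etemp{j+l}\) from above by the "flushed" quantity at time \(j+\tau\). Once \(\tau\) steps have passed, all missing-update debt accrued before \(j\) has been paid off by~\eqref{eq:mjk-properties-new}, so \(\Etemp{j+\tau}\) is essentially \(\expectabbr{j+\tau}\), which is close to the small \(\expectabbr{j}\). The remaining care is only in checking that the index range in~\eqref{eq:Etemp-flush-new} is contained in \([j,j+l-1]\), and that monotonicity of \(\Etemp{}\) is applied in the right direction.
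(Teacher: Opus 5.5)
Your proof is correct and follows the paper's argument. Case~1 goes through~\eqref{eq:property-monodecrease-l-new} and Lemma~\ref{lem:block-progress-new} exactly as in the paper, and Case~2 uses the same split, combining monotonicity of \(\Etemp{j}\), the flush bound~\eqref{eq:Etemp-flush-new}, and the displacement estimate via Lemma~\ref{lem:triangle-ineq-new}. The only difference is cosmetic: the paper thresholds the partial sum \(\sum_{k=j}^{j+\tau-1}\expect{(\gamma^{(k)})^2}\) over the first \(\tau\) iterations rather than your full-block sum \(S^{(j)}\). This changes nothing, since only those first \(\tau\) terms enter your small-progress subcase, and the full-block decrease handles your large-progress subcase directly.
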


\begin{proof}
    We first consider \(\expectabbr{j}\geq\theta\Etemp{j}\).
    By~\eqref{eq:property-monodecrease-l-new} with \(l=\tau+l_0\) and then using Lemma~\ref{lem:block-progress-new}, we have
    \begin{align*}
        \Etemp{j+l}
        &\leq\Etemp{j}-a_\tau c_{l_0,\tau}\expectabbr{j}
        =\bigl(1-a_\tau c_{l_0,\tau}\theta\bigr)\Etemp{j},
    \end{align*}
    which proves~\eqref{eq:case1-new}.

    We then consider \(\expectabbr{j}<\theta\Etemp{j}\) by distinguishing two cases according to the progress made during the first \(\tau\) iterations of the block.
    If
    \[
        \sum_{k=j}^{j+\tau-1}\expect{(\gamma^{(k)})^2}\geq\varepsilon\Etemp{j},
    \]
    then~\eqref{eq:property-monodecrease-l-new} together with~\eqref{eq:property-monodecrease-new} gives
    \begin{align*}
        \Etemp{j+l}\leq\Etemp{j+\tau}
        &\leq\Etemp{j}-a_\tau\sum_{k=j}^{j+\tau-1}\expect{(\gamma^{(k)})^2}
        \leq\bigl(1-a_\tau\varepsilon\bigr)\Etemp{j}.
    \end{align*}

    It remains to consider
    \[
        \sum_{k=j}^{j+\tau-1}\expect{(\gamma^{(k)})^2}<\varepsilon\Etemp{j}.
    \]
    From~\eqref{eq:lem-proof:Exi-xj} with \(i=j+\tau\), we have
    \begin{equation} \label{lem:proof:Exjtau-xj}
        \expect{\normA{x^{(j+\tau)}-x^{(j)}}^2}
        \leq\beta^2\tau\sum_{k=j}^{j+\tau-1}\expect{(\gamma^{(k)})^2}
        <\beta^2\tau\varepsilon\Etemp{j}.
    \end{equation}
    Taking expectations of 
    \[
        x^{(j+\tau)}-x^\star=(x^{(j)}-x^\star)+(x^{(j+\tau)}-x^{(j)})
    \]
    and then applying Lemma~\ref{lem:triangle-ineq-new}, we obtain
    \begin{align*}
        \sqrt{\expectabbr{j+\tau}}
        &\leq\sqrt{\expectabbr{j}}+\sqrt{\expect{\normA{x^{(j+\tau)}-x^{(j)}}^2}}\\
        &<\sqrt{\theta\Etemp{j}}+\beta\sqrt{\tau\varepsilon\Etemp{j}}\\
        &=\left(\sqrt\theta+\beta\sqrt{\tau\varepsilon}\right)\sqrt{\Etemp{j}},
    \end{align*}
    where the second inequality is derived from the assumption of this case and using~\eqref{lem:proof:Exjtau-xj}. 
    We further have
    \begin{equation} \label{eq:Ejplus-tau-small-new}
        \expectabbr{j+\tau}\leq\left(\sqrt\theta+\beta\sqrt{\tau\varepsilon}\right)^2\Etemp{j}.
    \end{equation}
    Together with~\eqref{eq:property-monodecrease-new} and~\eqref{eq:Etemp-flush-new}, we derive
    \begin{align*}
        \Etemp{j+l}\leq\Etemp{j+\tau}
        &\leq\left[\left(\sqrt\theta+\beta\sqrt{\tau\varepsilon}\right)^2+\beta^2\omega_\tau\varepsilon\right]\Etemp{j}
        =\left[\theta+\beta^2(\tau+\omega_\tau)\varepsilon+2\beta\sqrt{\theta\tau\varepsilon}\right]\Etemp{j}.
    \end{align*}
    Combining these two cases, we can prove~\eqref{eq:case2-new}.
\end{proof}

\paragraph{Step 5: Optimizing the contraction bound.}
Combining the two cases shown in Lemma~\ref{lem:cases-new}, we have
\begin{equation} \label{eq:minimax-main-new}
    \Etemp{j+l}\leq\max\left\{1-a_\tau c_{l_0,\tau}\theta,\, 1-a_\tau\varepsilon,\, \theta+\beta^2(\tau+\omega_\tau)\varepsilon+2\beta\sqrt{\theta\tau\varepsilon}\right\}\Etemp{j},
\end{equation}
which introduces auxiliary parameters, i.e., \(\theta\) and \(\varepsilon\), that determine the threshold between the different cases.
To minimize the worst of the contraction factors in~\eqref{eq:minimax-main-new}, for a fixed \(l_0\), we choose these parameters using a minimax optimization, that is,
\begin{equation} \label{eq:thm-proof:minimax}
    1-a_\tau c_{l_0,\tau}\theta
    = 1-a_\tau\varepsilon
    = \theta+\beta^2(\tau+\omega_\tau)\varepsilon+2\beta\sqrt{\theta\tau\varepsilon}.
\end{equation}
We are now ready to prove Theorem~\ref{thm:improved} by solving this minimax problem.

\begin{proof}[Proof of Theorem~\ref{thm:improved}]
Let \(c:=c_{l_0,\tau}\) for this paragraph.
For fixed \(l_0\), the minimax problem~\eqref{eq:thm-proof:minimax} is solved by balancing the three branches, i.e.,
\begin{align}
    1-a_\tau  c_{l_0,\tau}\theta^{\star} &=1-a_\tau\varepsilon^{\star}, \label{eq:thm-proof:factor12}\\
    1-a_\tau c_{l_0,\tau}\theta^{\star} &=\theta^{\star}+\beta^2(\tau+\omega_\tau)\varepsilon^{\star}+2\beta\sqrt{\theta^{\star}\tau\varepsilon^{\star}}. \label{eq:thm-proof:factor23}
\end{align}
From~\eqref{eq:thm-proof:factor12}, we have
\begin{equation} \label{eq:eps-theta-balance-new}
    \varepsilon^\star=c_{l_0,\tau}\theta^\star.
\end{equation}
It remains to consider~\eqref{eq:thm-proof:factor23}.
Substituting~\eqref{eq:eps-theta-balance-new} for \(\varepsilon^{\star}\) in~\eqref{eq:thm-proof:factor23}, we obtain
\[
    1-a_\tau c_{l_0,\tau}\theta^\star=\theta^\star\bigl(1+\beta^2(\tau+\omega_\tau)c+2\beta\sqrt{\tau c_{l_0,\tau}}\bigr),
\]
which gives
\begin{equation} \label{eq:theta-star-new}
    \theta^\star=\frac{1}{1+\bigl(a_\tau+\beta^2(\tau+\omega_\tau)\bigr)c_{l_0,\tau}+2\beta\sqrt{\tau c_{l_0,\tau}}}.
\end{equation}
Substituting~\eqref{eq:theta-star-new} for \(\theta=\theta^{\star}\) in~\eqref{eq:minimax-main-new}, we have
\begin{equation}
    \Etemp{j+l}\leq\left(1-\frac{a_\tau c_{l_0,\tau}}{1+\bigl(a_\tau+\beta^2(\tau+\omega_\tau)\bigr)c_{l_0,\tau}+2\beta\sqrt{\tau c_{l_0,\tau}}}\right)\Etemp{j}.
\end{equation}
    Let \(j+l=tl\).
    We then obtain
    \begin{equation}
        \Etemp{tl}\leq\left(1-\frac{a_\tau c_{l_0,\tau}}{1+\bigl(a_\tau+\beta^2(\tau+\omega_\tau)\bigr)c_{l_0,\tau}+2\beta\sqrt{\tau c_{l_0,\tau}}}\right)^t\Etemp{0},
    \end{equation}
    which concludes the proof by noticing \(\Etemp{0}=\expectabbr{0}\) and \(\expectabbr{tl}\leq\Etemp{tl}\).
\end{proof}

\begin{remark}[Consistency with the synchronous case] \label{rem:sync-consistency}
The convergence factor obtained from the minimax optimization above does not reduce to the standard synchronous bound by simply setting \(\tau=0\).
This discrepancy is an artifact of the two-case argument and the subsequent optimization of the auxiliary parameters, rather than an intrinsic loss of the analysis in the synchronous setting.
Indeed, the minimax optimization is designed to provide a uniform contraction over both cases by choosing \(0<\theta<1\) and balancing the corresponding contraction factors.
However, when \(\tau=0\), \(\tilde{\Phi}^{-}(j)=\emptyset\) and in \(\Etemp{j}=\expectabbr{j}\).
Hence, by taking \(\theta=1\), every iteration falls into the first case, whereas the second case is vacuous.
The auxiliary parameter \(\varepsilon\) and the associated minimax balancing are therefore unnecessary.
In this case, the one-step estimate in Lemma~\ref{lem:one-step-new} reduces directly to
\[
\expectabbr{j+1} = \expectabbr{j} - \beta(2-\beta)\expect{(\gamma^{(j)})^2}.
\]
Since
\[
\expect{(\gamma^{(j)})^2} = \frac{1}{n}\expect{\norm{Ae^{(j)}}^2} \geq \mu\expectabbr{j},
\]
we recover the standard synchronous RGS convergence bound
\[
\expectabbr{j+1} \leq \bigl(1-\beta(2-\beta)\mu\bigr)\expectabbr{j}
\]
as shown in~\cite{LL2010}.
Thus, the additional factor appearing in the denominator of the optimized bound should be viewed as a technical price to obtain a uniform estimate that also covers the delay-dominated case when \(\tau>0\), rather than as a deterioration inherent in the synchronous algorithm.
\end{remark}

%% file: discussion.tex
\section{Discussion}
\label{sec:discussion}
In this section, we provide a discussion of the analysis along with accompanying numerical experiments.
Our attention is restricted to scenarios of distributed memory systems.
The case of shared memory has already been studied in~\cite{ADG2015}.

To generate linear systems \(Ax=b\), we employ mainly three \(100\)-by-\(100\) test matrices as \(A\) to demonstrate our theoretical findings.
``Matrix 1'' and ``Matrix 2'' both have condition number \(\kappa(A) = 100\); the eigenvalues of Matrix 1 are arithmetically distributed, while those of Matrix 2 are randomly generated with a uniformly distributed logarithm. 
As a third test case, we use the two-dimensional \(5\)-point discrete negative Laplacian matrix, generated in MATLAB via the command \texttt{delsq(numgrid(`S', 12))}.
The right-hand side \(b\) is generated by the command \texttt{rand(100, 1)-0.5}.

To demonstrate the convergence rate stated in Theorem~\ref{thm:improved} and to gain further insight into the convergence behavior of the asynchronous Jacobi method, we employ MATLAB code based on the model~\eqref{eq:model} and~\cite{CM1969} to simulate the behavior of the asynchronous Jacobi method.
In the following, we consider the effects of \(\rho\), \(\tau\), and \(\beta\), respectively.

\subsection{Influence of \(\rho\)}
Recall the definition of \(\rho\), i.e.,
\[
\rho = \frac{1}{n}\max_{t=1,\dotsc,n}\bigl\{\sum_{r=1}^n\abs{\bigl(\bar{A}_{\rm{off}}\bigr)_{r,t}}\bigr\},
\]
where, in distributed memory systems, \(\bar{A}_{\rm{off}}\) consists of the off-diagonal blocks of \(\bar{A} = \sqrt{\Lambda}^{-1}A\sqrt{\Lambda}^{-1}\). 
Notice that \(\bar{A}_{r,t} \leq 1\) for all \(1\leq r\), \(t\leq n\).  
Consequently, one typically has \(\rho = \bigO(1/n)\), in particular when \(A\) is sparse.

In the distributed memory setting, the parameter \(\rho\) depends not only on the properties of the matrix \(A\), but also on the number of nodes and the manner in which the rows or columns of \(A\) are partitioned across these nodes.
Suppose that, as described in Algorithm~\ref{algo:async-Jacobi}, the \(i\)-th node stores \(x_{\Omega_i}\), \(b_{\Omega_i}\), and \(A_{\Omega_i, :}\), where \(p\) denotes the total number of nodes and \(\{\Omega_i\}_{i=1}^p\) constitutes a partition of the index set \(\{1, 2, \dotsc, n\}\), i.e., \(\cup_{i=1}^p \Omega_i = \{1, 2, \dotsc, n\}\).
Notably, there is an implicit form of local synchronization within each node.
In particular, when updating the \(k\)-th component of \(x\) at \((j+1)\)-st iteration, that is, when \(d^{(j)}=I_k\) in the model~\eqref{eq:model}, the most recent updates of other components within the same partition $\Omega_i$ (\(k\in \Omega_i\)) are always available and can be utilized.
Therefore, for a fixed matrix \(A\) of a given dimension, employing fewer nodes generally implies that each node stores a larger portion of the data.
This increases the extent of local synchronization, which in turn tends to reduce the value of $\rho$ and consequently accelerates convergence.

We now provide a clearer explanation of \(\rho\) and \(\bar{A}_{\mathrm{off}}\).
The matrix \(\bar{A}_{\mathrm{off}}\) depends not only on the structure of \(A\), but also on the synchronization pattern across nodes.
In particular, for \(A_{r,t} \neq 0\), whether \((\bar{A}_{\mathrm{off}})_{r,t}\) is zero depends on whether the variables \(x_r\) and \(x_t\) are synchronized across the nodes where they reside, either implicitly or explicitly.
For example, if \(x_r\) and \(x_t\) are stored on the same node, this corresponds to implicit synchronization, and thus \((\bar{A}_{\mathrm{off}})_{r,t} = 0\).
More generally, even if \(x_r\) and \(x_t\) are located on different nodes, \((\bar{A}_{\mathrm{off}})_{r,t}\) is still zero if local synchronization is performed between these nodes.
In the extreme case where global synchronization is enforced after each iteration, we have \(\bar{A}_{\mathrm{off}} = 0\), which implies \(\rho = 0\).
Therefore, by exploiting the sparsity structure of \(A\) to enable local synchronization, more entries of \(\bar{A}_{\mathrm{off}}\) can be driven to zero, thereby further reducing \(\rho\).
Moreover, in the definition~\eqref{eq:def-rho-omega} of \(\rho\), the value of \(\rho\) depends not only on the number of nonzero entries, but also on the magnitudes of those nonzero entries. 
This suggests that, in the distributed memory setting, since \(\rho\) affects convergence, it may be beneficial to use parallel data distributions that aim to reduce \(\rho\), rather than focusing solely on the traditional goal of strictly minimizing communication.

In Figure~\ref{fig:tau-rho}, we consider a \(100 \times 100\) Laplacian matrix under different numbers of nodes (denoted by \(p\)), with the rows of \(A\) distributed evenly across nodes.
For \(p = 5\), \(10\), and \(20\), each node stores \(20\), \(10\), and \(5\) rows of \(A\), respectively, and the corresponding values of \(\rho\) are \(2.5 \times 10^{-3}\), \(5.0 \times 10^{-3}\), and \(7.5 \times 10^{-3}\).
A comparison of the dashed (or solid) curves shows that a smaller value of \(\rho\) corresponds to a faster convergence rate.

\begin{figure}[!tb]\centering
\includegraphics[width=\textwidth]{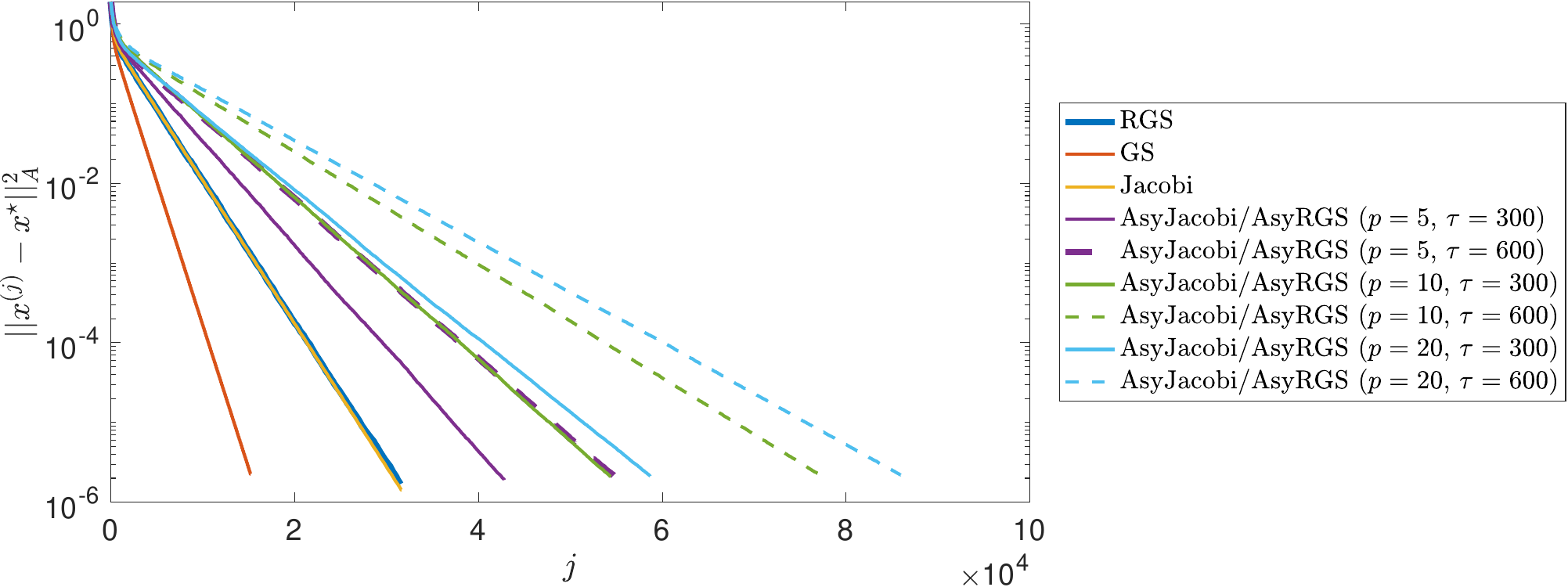}
\caption{Comparison of the Jacobi, RGS, GS, and AsyJacobi/AsyRGS methods for various values of the number of nodes and the delayed bound, i.e., \(p\) and \(\tau\) on the \(100\)-by-\(100\) Laplacian matrix. Here, AsyJacobi/AsyRGS refers to the asynchronous Jacobi/RGS method. Consistent with the notation used above, \(j\) denotes a single component update.}
\label{fig:tau-rho}
\end{figure}

\subsection{Influence of \(\tau\)}
The parameter \(\tau\) depends on the communication efficiency of the parallel system and represents delays that affect the convergence rate.
As \(\tau\) increases, the convergence rate deteriorates, as illustrated in Figure~\ref{fig:tau-rho}; in particular, this effect can be observed by comparing dashed and solid curves of the same color.

It should be noted that \(\tau\) is defined as an upper bound on the number of delayed iterations, rather than in terms of computational time.
In the distributed memory setting, \(\tau\) is primarily influenced by two factors.
The first is the number of iterations that can be executed simultaneously within a given time interval, denoted by \(\mathrm{iter}_t\).
The second is the number of time intervals required for the slowest node to transmit data to the node responsible for performing the current iteration, denoted by \(t_{\mathrm{comm}}\).
Accordingly, \(\tau \approx t_{\mathrm{comm}}\cdot \mathrm{iter}_t\).

The first factor mainly depends on the number of nodes, and one typically has \(\mathrm{iter}_t \approx p\), since a larger number of nodes allows more iterations to be carried out in parallel within the same time period.

The second factor depends on the number of nodes, the communication efficiency between nodes, and the underlying communication network topology.
For example, in \(2\)-D or \(3\)-D grid topologies, increasing the number of nodes generally increases the average communication distance, resulting in a larger \(t_{\mathrm{comm}}\) and hence a larger \(\tau\).
In contrast, for a fully connected network, increasing the number of nodes has a negligible impact on \(t_{\mathrm{comm}}\).
In addition, the sparsity structure of \(A\) and the way the data are distributed across nodes also influence \(t_{\mathrm{comm}}\).
In an ideal scenario, each row of \(A\) contains only \(\bigO(1)\) nonzero entries, i.e., the number of nonzeros per row is independent of both \(p\) and \(n\).
In this case, each node needs to communicate with at most \(\bigO(1)\) other nodes.
Consequently, with a suitable data distribution, the parameter \(t_{\mathrm{comm}}\) is unlikely to grow with either \(p\) or \(n\).

\subsection{Influence of \(\beta\)}

\begin{figure}[!tb]\centering
\includegraphics[width=\textwidth]{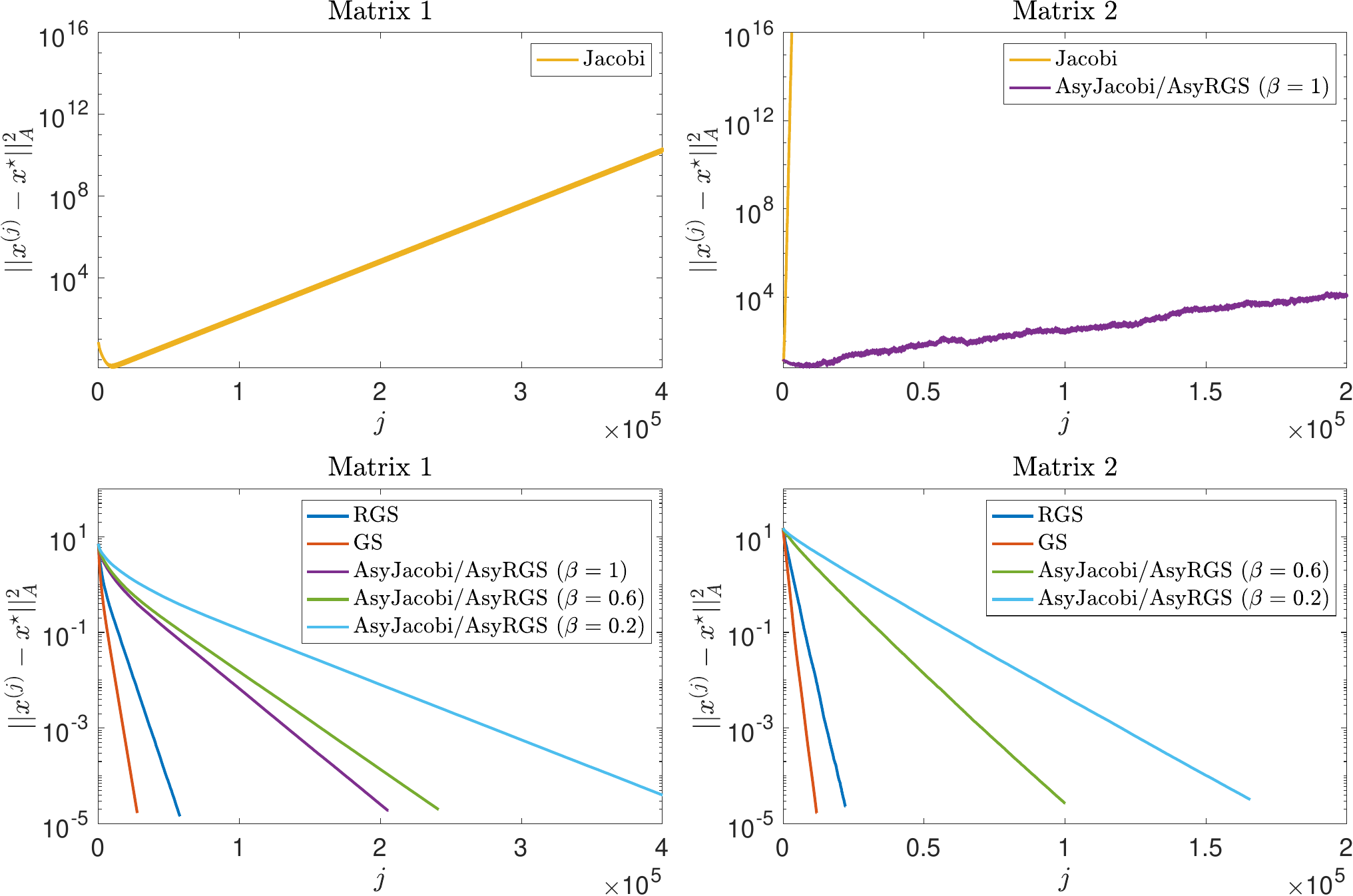}
\caption{Comparison of the Jacobi, RGS, GS, and AsyJacobi/AsyRGS methods for various values of \(\beta\) on ``Matrix 1'' and ``Matrix 2''. Here, AsyJacobi/AsyRGS refers to the asynchronous Jacobi/RGS method. Consistent with the notation used above, \(j\) denotes a single component update.}
\label{fig:div}
\end{figure}

Theorem~\ref{thm:improved} requires that \(\beta\) satisfies~\eqref{eq:beta-condition-new} to guarantee the convergence of the asynchronous method.
However, this requirement can be somewhat pessimistic, primarily due to the involvement of \(\tau\). 
The dependence on \(\tau\) arises from the use of the bound \(\abs{\{1,\dotsc, n\}\setminus\setphij{j}} \leq \tau\) in the analysis, which reflects a worst-case scenario rather than the average behavior observed in practice.
Nevertheless, despite this conservative estimation, the resulting condition remains informative.
In particular, it highlights that the choice of \(\beta\) should be closely related to \(\rho\) and \(\tau\).
This insight suggests that when \(\rho\) or \(\tau\) are large, selecting a smaller value of \(\beta\) can help ensure convergence of the asynchronous method.
This phenomenon is illustrated in the right sub-plot of Figure~\ref{fig:div}, where the asynchronous method diverges for \(\beta = 1\), but converges for smaller values such as \(\beta = 0.6\) or \(\beta = 0.2\).

Although a smaller value of \(\beta\) can improve stability and ensure convergence, it also affects the convergence rate.
In particular, the choice of \(\beta\) influences not only whether the asynchronous method converges, but also the convergence rate characterized in~\eqref{eq:thm-improved:convrate-new} and~\eqref{eq:cor-improved:convrate-new}.
According to~\eqref{eq:thm-improved:convrate-new} and~\eqref{eq:cor-improved:convrate-new}, an excessively small \(\beta\) leads to a slower convergence rate.
This trade-off is illustrated in Figure~\ref{fig:div}.
Although \(\beta = 0.6\) guarantees convergence for both ``Matrix 1'' and ``Matrix 2'', its convergence rate is slower than that of \(\beta = 1\) for ``Matrix 1''.
A similar behavior is observed for \(\beta = 0.2\), where convergence is still ensured, but the convergence rate is further reduced compared to \(\beta = 0.6\).

%% file: conclusion.tex
\section{Conclusions}
\label{sec:conclusions}
In this work, we have developed a unified framework for the convergence analysis of asynchronous Jacobi/RGS methods in shared memory and distributed memory environments.
The framework accommodates the different patterns of stale information arising in the two architectures and allows their effects on convergence to be studied within the same analysis.
For symmetric positive definite linear systems, we established linear convergence in expectation under an explicit stability condition and quantified the influence of communication delays and communication patterns on the resulting convergence rate.

An important distinction between our result and the convergence-rate analysis of~\cite{ADG2015} is the dependence on the delay bound \(\tau\).
Their analysis for asynchronous RGS in shared memory involves factors that depend exponentially on \(\tau\).
Such factors can lead to increasingly pessimistic bounds as the delay grows and are particularly unfavorable for distributed memory implementations, where communication delays can be considerably larger.
In contrast, the convergence bound obtained in our analysis removes these exponentially \(\tau\)-dependent factors, with the effect of the delay characterized through \(\sqrt{\rho\tau}+\rho\tau\).
This provides a more favorable description of the large-delay regime and makes explicit the joint roles of the delay bound and the communication structure.

The resulting theory also shows that, under an appropriate scaling regime with \(\rho\tau=\bigO(1)\), the guaranteed per-iteration convergence rate has the same asymptotic order as that of synchronous RGS.
Thus, although asynchronicity affects the constants in the convergence bound, it does not need to change the asymptotic convergence-rate order when the delay and communication structure are appropriately balanced.
This observation helps to clarify when asynchronous Jacobi/RGS methods can retain favorable convergence behavior in both shared memory and distributed memory environments.

%% file: acknowledgments.tex
\section*{Acknowledgments}
The authors thank Edmond Chow for providing the MATLAB script that simulates the asynchronous Jacobi method, following the approach in~\cite{CM1969}.

Both authors are supported by the European Union (ERC, inEXASCALE, 101075632). Views and opinions expressed are those of the authors only and do not necessarily reflect those of the European Union or the European Research Council. Neither the European Union nor the granting authority can be held responsible for them. Both authors additionally acknowledge support from the Charles University Research Centre program No. UNCE/24/SCI/005.